\documentclass[a4paper,11pt]{amsart}

\usepackage{amstext,amsfonts,amsthm,graphicx,amssymb,amscd,epsfig}
\usepackage{amsmath}
\usepackage[ansinew]{inputenc}    

\usepackage{mathrsfs}

\theoremstyle{definition}
\newtheorem{definition}{Definition}[section]
\newtheorem{ex}[definition]{Example}
\newtheorem{rem}[definition]{Remark}

\theoremstyle{plain}
\newtheorem{prop}[definition]{Proposition}
\newtheorem{lem}[definition]{Lemma}
\newtheorem{coro}[definition]{Corollary}
\newtheorem{teo}[definition]{Theorem}

\newfont{\bbb}{msbm10 scaled\magstephalf}     
\def\C{\mathbb C}
\def\K{\mathbb K}
\def\R{\mathbb R}

\def\N{\mbox{\bbb N}}

\def\R{\mbox{\bbb R}}

\def\O{\mathcal O}
\def\A{\mathscr A}

\def\Aecod{\operatorname{\mbox{$\A_e$-cod}}}

\def\Rcod{\operatorname{\mbox{$\mathscr R$-cod}}}

\def\Kcod{\operatorname{\mbox{$\mathscr K$-cod}}}

\setlength{\textwidth}{13.8cm} \setlength{\textheight}{20cm}

\usepackage[usenames]{color}

\newcommand{\tae}{T\A_e}

\newcommand{\nae}{N\A_e}

\newcommand{\tR}{T\mathscr{R}}

\newcommand{\tK}{T\mathscr{K}}
\newcommand{\fpdat}[3]{\left. \frac{\partial #1}{\partial #2} \right|_{#3}}
\newcommand{\dpar}[2]{\frac{\partial #1}{\partial #2}}
\def\m{\mathfrak{m}}

\title{Simplicity of Augmentations of Codimension 1 germs and by Morse functions}

\author{I. Breva Ribes, R. Oset Sinha}

\date{}

\address{Departament de Matem\`atiques,
Universitat de Val\`encia, Campus de Burjassot, 46100 Burjassot,
Spain}

\email{raul.oset@uv.es}

\email{igbreri@alumni.uv.es}

\thanks{Work of R. Oset Sinha partially supported by Grant PGC2018-094889-B-100 funded by MCIN/AEI/ 10.13039/501100011033 and by ``ERDF A way of making Europe"}

\subjclass[2000]{Primary 58K40; Secondary 58K20, 32S05} \keywords{simplicity of map-germs, augmentations, versal unfoldings}

\begin{document}
\begin{abstract}
We study the simplicity of map-germs obtained by the operation of augmentation and describe how to obtain their versal unfoldings.
When the augmentation comes from an $\A_e$-codimension 1 germ or the augmenting function is a Morse function, we give a complete characterisation for simplicity.
These characterisations yield all the simple augmentations in all explicitly obtained  classifications of $\A$-simple monogerms except for one ($F_4$ in Mond's list from $\C^2$ to $\C^3$). 
Moreover, using our results we produce a list of corank 1 simple augmentations from $\C^4$ to $\C^4$.
\end{abstract}

\maketitle

\section{Introduction}

Since the very beginning the study of singularities, both of maps and spaces, has been linked to the concept of simplicity. 
Heuristically speaking, an object is said to be simple for a certain equivalence relation if any deformation of it lies in a finite number of equivalence classes. 
When studying singularities of maps, simplicity seems to be a reasonable criteria for when to stop classifying singularities. However, the lack of simplicity and the modality of a map-germ can be interesting by themselves. 
In \cite{arnoldnf} Arnol'd established the celebrated classification $A_k$ ($k\geq 1$), $D_k$ ($k\geq 4$), $E_6$, $E_7$ and $E_8$ of simple function germs (under $\mathscr R$-equivalence, i.e. changes of variables in the source) and classified the unimodal and bimodal germs. 
The modality is the dimension of the moduli space obtained by the quotient of the bifurcation diagram under the equivalence relation. 
In fact, Gabri\`{e}lov obtained in \cite{gabrielov} a relation between the lack of simplicity and the topology of the deformations and of the bifurcation diagrams by proving that the modality of a function germ is equal to the dimension of the $\mu$-constant stratum, i.e. the stratum in the bifurcation diagram for which the Milnor number is equal to the Milnor number of the function corresponding to the origin in the parameter space of the versal unfolding.

In general, when working with map-germs and $\mathscr A$-equivalence (changes of variables in source and target), it is not easy at all to determine when a certain map-germ is simple or not and to compute its modality. See section \ref{notation} for a precise definition of $\mathscr R$ and $\mathscr A$-simplicity. These difficulties can be seen in the several classifications of simple monogerms that have been carried out for different dimensions of the source and target $(n,p)$. For example, Rieger classified simple germs in $(2,2)$ (\cite{rieger2en2}), Mond in $(2,3)$ (\cite{mond}), Marar and Tari in $(3,3)$ (\cite{marartari}) and Houston and Kirk in $(3,4)$ (\cite{houstonkirk}).
It is impossible not to notice that most of the germs in these lists are augmentations of germs in lower dimensions.

\begin{definition}
For $\K=\R$ or $\C$, let $f\colon(\K^n,0)\to(\K^p,0)$ be a smooth map-germ that admits a 1-parameter stable unfolding, i.e., a 1-parameter family of map-germs $F(x,\lambda) = (f_\lambda(x),\lambda)$ with $f_0 = f$ which is stable as a map-germ.

Let $g\colon(\K^d,0)\to(\K,0)$ be a smooth function.
The augmentation of $f$ by $F$ via $g$ is defined as the $\A$-equivalence class of the germ given by:
$$A_{F,g}(f)(x,z) = (f_{g(z)}(x),z)$$
We say that $f$ is the augmented map and $g$ is the augmenting function.
\end{definition}

For example, consider the germ $(y^2,y^3)$ and its 1-parameter stable unfolding $(y^2,y^3+\lambda y,\lambda)$. The augmentations by $g(z)=z^{k+1}$ yield the family $S_k=(y^2,y^3+z^{k+1}y,z)$ in Mond's list.

A further analysis shows that, in fact, all of these simple augmentations (except for $F_4$ in Mond's list) come from cases where the augmented map has $\mathscr A_e$-codimension 1 or the augmenting function is a Morse function. It is therefore natural to study when an augmentation yields a simple map-germ. It is clearly not enough for the augmented map and the augmenting function to be simple: no augmentation of $(y^2,y^5)$ via the augmenting function $g(z)=z^4$ appears in Mond's list, and therefore none of them are simple.

In this paper we give the first steps in order to understand the simplicity of augmentations. 
In Section \ref{clases} we prove that in the case $g$ is $\mathscr R$-equivalent to a quasihomogeneous function, then the $\mathscr A$-equivalence class of the augmentation is uniquely determined by the $\mathscr R$-equivalence class of $g$. 
We also prove that if the augmented map has codimension 1, then the augmentation is independent of the choice of 1-parameter stable unfolding. In Section \ref{versal} we obtain a versal unfolding for any augmentation where the augmenting function is equivalent to a quasihomogeneous function. This is interesting in itself. 
For instance, in Example \ref{ex_versals} we give the versal unfolding for any map-germ from $\C^2$ to $\C^3$ (simple or not) of type $(x,y^2,y^{2k+1} + x^ly)$, $k,l\geq 1$. 
In Section \ref{simplicity} we give our main results, we prove that an augmentation where the augmented map has $\mathscr A_e$-codimension 1 is simple if and only if the augmenting function is simple, and that an augmentation where the augmenting function is a Morse function is simple if and only if the augmented map is simple. 
As mentioned above, this justifies the simplicity of the augmentations of all explicitly obtained classifications of simple monogerms except for $F_4$ in Mond's list. 
Notice that besides the classifications mentioned above, there are also known classifications for curves (i.e $n=1$) or for $(n,2n)$ (\cite{riegerpop}). 
However, the former cannot be augmentations due to the dimension of the source space and the latter are not augmentations either because the only stable germs are the immersions.
Goryunov also classified simple map germs when $n\geq p$, but no explicit lists of normal forms are provided, and so further analysis would be needed to find augmentations in these dimensions. 
The characterisations given in this paper also give a method to produce examples of finitely determined non-simple germs with any number of modal parameters.
 
Finally, in Section \ref{4en4} we produce a list of corank 1 simple augmentations from $\C^4$ to $\C^4$ using our results and, based on the classifications of \cite{rieger2en2} and \cite{marartari}, we conjecture that these are all possible simple augmentations of corank 1.

\emph{Acknowledgements:} The authors thank the Singularity Group in the Universitat de Val\`{e}ncia and M.A.S. Ruas for helpful discussions.

\section{Notation}\label{notation}
Let $\K$ be either $\R$ or $\C$.
We will denote by $\O_n$ the ring of germs of functions over $\K^n$, with maximal ideal $\m_n$, and by $\theta_n$ the space of germs of vector fields in $n$ components and $n$ variables.
If $f\colon(\K^n,0)\to(\K^p,0)$ is a smooth map-germ, we will write $\theta(f)$ to denote the space of germs of vector fields along $f$.
We will assume $(n,p)$ belongs to the nice dimensions: those where stable maps are dense.

Denote by $\operatorname{Diff}(\K^m,0)$ the group of germs of diffeomorphisms on $\K^m$.
Then $\A = \operatorname{Diff}(\K^n,0)\times \operatorname{Diff}(\K^p,0)$ and $\mathscr{R} = \operatorname{Diff}(\K^n,0)$ are groups that act over the set of map-germs $f\colon(\K^n,0)\to(\K^p,0)$ with the natural compositions.
The equivalence relations defined by the orbits of this actions are called $\A$-equivalence and $\mathscr{R}$-equivalence respectively.

Following the notation in the recent book \cite{nunomond}, and to be completely precise, we can define similar equivalence relations with the set of diffeomorphisms that do not necessarily preserve the base points in the source and target of the map-germs: these are called left-right and right equivalences, respectively.

Let $tf\colon\theta_n\to\theta(f)$ and $\omega f \colon\theta_p\to\theta(f)$ be the mappings defined by $tf(\xi) = df\circ\xi, \omega f(\eta) = \eta\circ f$.
Define the $\A_e$-tangent space of $f$ and the $\A_e$-normal space of $f$, respectively, as:
\begin{equation*}
\begin{aligned}
\tae f  & = tf(\theta_n) + \omega f (\theta_p), \\
\nae f  & = \frac{\theta(f)}{tf(\theta_n) + \omega f (\theta_p)}
\end{aligned}
\end{equation*}
The $\A_e$-codimension of $f$ is defined as $\Aecod(f) = \dim_\K(\nae f)$.
We will say that $f$ is stable if $\Aecod(f) = 0$.
A precise definition of stability using unfoldings can be found in \cite{nunomond}.
We define, for functions $g\in \O_d$, the following $\mathscr R$-tangent spaces:
\begin{equation*}
\begin{aligned}
T\mathscr{R}g  & = \m_d\cdot Jg, \\
T\mathscr{R}_e g & = Jg
\end{aligned}
\end{equation*}
with $Jg$ the $\O_d$-ideal generated by the partial derivatives of $g$.
Codimensions are defined in the same way as for $\A$.

An $m$-parameter unfolding of $f$ is a map-germ $F\colon(\K^n\times\K^m,0)\to(\K^p\times\K^m,0)$ with a representative of the form $F(x,\lambda) = (f_\lambda(x),\lambda)$ and $f_0 \equiv f$.
We say that $F$ is a versal unfolding if for any other $a$-parameter unfolding of $f$ there exist a differentiable map-germ $\alpha\colon(\K^a,0)\to(\K^m,0)$ and two unfoldings of the identity map, $\Psi,\Phi$ such that:
$$\alpha^*F = \Psi\circ G\circ \Phi$$
with $\alpha^*F(x,\mu) = (f_{\alpha(\mu)},\mu)$.
This definition of versality is equivalent to checking that $F$ satisfies the following condition:
\begin{equation*}
\tae f + \text{Sp}_\K\left\lbrace\fpdat{f_\lambda}{\lambda_1}{\lambda=0},\ldots,\fpdat{f_\lambda}{\lambda_m}{\lambda=0}\right\rbrace = \theta(f)
\end{equation*}

\begin{definition}
A map-germ is said to be $\A$-simple if there are a finite number of left-right equivalence classes such that, given its versal unfolding $F\colon(\K^n\times\K^d,0)\to(\K^p\times\K^d,0)$, it admits a representative $F\colon U\to V$ with $U\subseteq \K^n\times\K^d$, $V\subseteq \K^p\times\K^d$ small enough neighbourhoods of the origin, such that $F(x,\lambda) =(f_\lambda(x),\lambda)$ and for every $\lbrace (x_1,\lambda),\ldots,(x_r,\lambda)\rbrace \subseteq U$ with $F(x_1,\lambda) = \ldots =  F(x_r,\lambda) = (y,\lambda)$, the equivalence class of $f_\lambda\colon(\K^n,\lbrace x_1,\ldots,x_r \rbrace)\to(\K^p,y)$ lies in one of those classes.
\end{definition}
Similar definitions of versality and simplicity can be defined for $\mathscr{R}$-equivalence.
For a detailed explanation of stability and versality we refer to Chapter 3 from \cite{nunomond}.

\begin{definition}
Let $f\colon(\K^n,0)\to(\K^p,0)$ be a smooth map-germ, an OPSU for $f$ is a 1-parameter stable unfolding $F\colon(\K^n\times\K,0)\to(\K^p\times\K,0)$ of the form $F(x,\lambda) = (f_\lambda(x),\lambda)$, with $f_0=f$.
\end{definition}

Throughout the paper, we will assume that the augmenting function $g$ has an isolated singularity.
The following result by Houston provides a simple way of estimating the $\A_e$-codimension of an augmentation:
\begin{teo}[Theorem 4.4 in \cite{houston}]\label{thm_houston_ineq}
Let $f\colon(\K^n,0)\to(\K^p,0)$ be a smooth map-germ that admits an OPSU $F$.
Let $g\colon(\K^d,0)\to(\K,0)$ be a smooth function.
$$\Aecod (A_{F,g}(f))\geq \Aecod (f) \tau(g)$$
with equality if $g$ is quasihomogeneous or $F$ is substantial.
\end{teo}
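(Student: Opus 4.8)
The plan is to compute $\nae A$ directly for $A := A_{F,g}(f)(x,z) = (f_{g(z)}(x),z)$, regarded as a module over $\O_d$ acting through the coordinates $z$ of the trivial factor, and to show it is generated by the pullback of the generator of $\nae f$. Conceptually $A$ is the fibre product of the stable unfolding $F$ with the map $(X,z)\mapsto(X,g(z))$ into the target of $F$, so Damon's theory of sections of stable images presents $\nae A$ through the logarithmic vector fields of the image of $F$; I will instead argue by an explicit relative computation, where the relevant relations are transparent. Write $\dot f_\lambda := \dpar{f_\lambda}{\lambda}$ and let $v\in\theta(A)$ be the field whose $\K^p$-component is $\fpdat{f_\lambda}{\lambda}{\lambda=g(z)}$ and whose $\K^d$-component vanishes. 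Because $F$ is stable it is in particular a versal unfolding of $f$, so by \eqref{eq_versal_char} the single class $[\dot f_0]$ spans $\nae f$ and $\Aecod(f)\le 1$; the substantive case is $\Aecod(f)=1$, where the first goal is the spanning statement $\nae A = \O_d\cdot[v]$.

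The relations that always hold come from the trivially unfolded factor. Computing $tA(\partial_{z_j}) = \left(\dpar{g}{z_j}\,\dot f_{g(z)},\,e_j\right)$ and $\omega A(\partial_{Z_j}) = (0,e_j)$, their difference shows that $\dpar{g}{z_j}\cdot v \in \tae A$ for every $j$. Hence the ideal $Jg$ annihilates $[v]$ in $\nae A$, that is, $Jg \subseteq \mathrm{Ann}([v])$. This is robust and requires nothing about $g$ or $F$.

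The crux, and the step I expect to be the main obstacle, is the pair of facts that $\nae A = \O_d\cdot[v]$ (no normal directions beyond the multiples of $v$) and that $\mathrm{Ann}([v]) \subseteq (g,Jg)$ (no relations on $v$ beyond $(g,Jg)$). Both must be extracted from the stability of $F$: stability forces all relations among the fields along $F$ to be generated in the relative situation over the parameter $\lambda$, and a Malgrange--Nakayama preparation argument should then descend this to $A$ after the substitution $\lambda=g(z)$, reducing the module computation for $A$ to that of $f$ relative to the parameter. Granting these, $\Aecod(A) = \dim_\K \O_d/\mathrm{Ann}([v])$ with $Jg \subseteq \mathrm{Ann}([v]) \subseteq (g,Jg)$, so $\tau(g) \le \Aecod(A) \le \mu(g)$, which yields the inequality $\Aecod(A) \ge \Aecod(f)\,\tau(g)$ (the factor $\Aecod(f)$ recording whether $[v]$ is nonzero).

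Finally I would settle the equality cases by pinning $\mathrm{Ann}([v])$ to $(g,Jg)$ from the other side. If $g$ is quasihomogeneous, the Euler relation $\sum_i w_i z_i\,\dpar{g}{z_i} = \delta\, g$ gives $g \in Jg$, whence $(g,Jg)=Jg$ and the squeeze above collapses to $\mathrm{Ann}([v])=Jg$, forcing $\Aecod(A)=\Aecod(f)\,\mu(g)=\Aecod(f)\,\tau(g)$. If instead $F$ is substantial, the defining property of substantiality supplies a relation expressing $\lambda\,\dot f_\lambda$ inside the relative tangent space $tf_\lambda(\theta_n)+\omega f_\lambda(\theta_p)$; substituting $\lambda=g(z)$ lifts this into $\tae A$ and shows $g\cdot v\in\tae A$, so $g\in\mathrm{Ann}([v])$ and $(g,Jg)\subseteq\mathrm{Ann}([v])$, again giving equality. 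In both cases the delicate points are exactly the spanning and the upper inclusion on the annihilator from the previous paragraph; the equality criteria only control the lower inclusion, so the whole weight of the argument rests on the preparation-theoretic reduction driven by the stability of $F$.
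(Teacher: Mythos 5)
First, a structural remark: the paper does not prove this statement at all --- it is quoted verbatim as Theorem 4.4 of Houston's paper \cite{houston} and used as a black box --- so your proposal has to stand entirely on its own merits (the closest internal material is Theorem \ref{thm_basis_main} and Corollary \ref{coro_augm_generadores}, which recover the inequality by a multi-generator version of the computation you attempt). Judged that way, there is a fatal error at the very first step: you assert that since $F$ is stable it is in particular a versal unfolding of $f$, hence $[\dot f_0]$ spans $\nae f$ and $\Aecod(f)\le 1$. The implication only goes the other way: every $\A_e$-versal unfolding is stable, but a stable unfolding need not be versal, and the existence of a one-parameter stable unfolding places no bound on $\Aecod(f)$. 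The paper's own example makes this concrete: $f(y)=(y^2,y^5)$ has $\Aecod(f)=2$, yet $F(y,t)=(y^2,y^5+ty,t)$ is an OPSU --- it is $\A$-equivalent to the cross-cap, hence stable. Indeed, if your claim were correct, the factor $\Aecod(f)$ in the statement could never exceed $1$ and the entire subject of augmenting higher-codimension germs (e.g.\ $F_4$ in Mond's list, or the unimodal examples of Section \ref{simplicity}) would be vacuous. Consequently the single-generator model $\nae A=\O_d\cdot[v]$ is the wrong framework: for $\Aecod(f)=k$ one must work with a basis $\gamma_1,\dots,\gamma_k$ of $\nae f$ and show that the $k\tau(g)$ classes $[\tau_s\gamma_m]$ are linearly independent in $\nae A$, which is exactly what Theorem \ref{thm_basis_main} and Corollary \ref{coro_augm_generadores} of the paper accomplish.

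Second, even granting the (unjustified) reduction to $\Aecod(f)=1$, the two claims you yourself flag as carrying the whole weight --- the spanning statement $\nae A=\O_d\cdot[v]$ and the inclusion $\operatorname{Ann}([v])\subseteq(g,Jg)$ --- are never proved; they are delegated to ``a Malgrange--Nakayama preparation argument'' with no indication of how relations along the stable germ $F$ descend through the substitution $\lambda=g(z)$. That descent is precisely the hard content: compare the proof of Theorem \ref{thm_basis_main}, which carries it out by expanding the vector fields $\xi$ and $\eta$ in a monomial basis of $\O_d/(g+Jg)$, applying Hadamard's Lemma to $\eta_i^{(j)}(f(x)+\lambda\tilde f(x))$, and extracting the coefficient of $\tau_s$. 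What you have actually established is only the easy inclusion $Jg\subseteq\operatorname{Ann}([v])$, coming from $tA(\partial_{z_j})-\omega A(\partial_{Z_j})$; this is correct (and the $\O_d$-module structure on $\nae A$ you use is legitimate, since the $z$-coordinates survive to the target), but by itself it gives no lower bound on $\Aecod(A)$, so neither the inequality nor the equality cases are reached. The outline of your strategy (annihilator ideal squeezed between $Jg$ and $(g,Jg)$, Euler relation collapsing the squeeze in the quasihomogeneous case, substantiality forcing $g\cdot v\in\tae A$) is a reasonable skeleton for the case $k=1$, but as written the proof is missing both its generality and its core argument.
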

An OPSU $F(x,\lambda)=(f_\lambda(x),\lambda)$ is said to be substantial if it admits a liftable vector field $\eta=(\eta_1,\ldots,\eta_{p+1})\in\theta_{p+1}$ such that $\eta_{p+1}\circ F(x,\lambda) = \lambda$.
Not all OPSUs are substantial.
For instance, the $\A_e$-codimension 2 map-germ $(x,y^4+xy^2+x^2y)$  admits the following OPSU:
$$(x,y^4 + xy^2 + x^2y + \lambda y, \lambda)$$
Using SINGULAR, one can check that it is not substantial.

Recall that if $g\colon(\K^d,0)\to(\K,0)$ is a smooth function, $\tau(g) = \operatorname{dim}_\K\frac{\O_d}{ \langle g\rangle + Jg}$ and $\mu(g) = \operatorname{dim}_\K\frac{\O_d}{Jg}$.
We say that $g$ is quasihomogeneous if there exists some weights $w_1,\ldots,w_d\in \N$ and some degree $w\in\N$ such that for any $\lambda\in \K$:
$$g(\lambda^{w_1}z_1,\ldots,\lambda^{w_d}z_d) = \lambda^wg(z)$$

\section{Characterising Equivalence of Augmentations}\label{clases}

From here on $f\colon(\K^n,0)\to(\K^p,0)$ is a smooth map-germ that admits an OPSU $F(x,\lambda) =(f_\lambda(x),\lambda)$ and $g\colon(\K^d,0)\to(\K,0)$ is a smooth function.
First we prove that the process of augmenting via $\mathscr R$-equivalent functions returns $\A$-equivalent augmentations.

\begin{prop} \label{prop_requiv_augm_aequiv}
If $g_1 \sim_\mathscr{R} g_2$, then $A_{F,g_1}(f) \sim_\A A_{F,g_2}(f)$
\end{prop}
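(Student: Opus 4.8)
The plan is to produce the $\A$-equivalence explicitly from the diffeomorphism that realises the $\mathscr{R}$-equivalence of the augmenting functions. By definition of $\mathscr{R}$-equivalence, $g_1 \sim_\mathscr{R} g_2$ means there is a germ of diffeomorphism $\phi\colon(\K^d,0)\to(\K^d,0)$ with $g_1 = g_2\circ\phi$. The observation driving the argument is that the parameter coordinate $z$ appears untouched in the target of the augmentation and only enters the augmented map through the value $g(z)$; hence a reparametrisation of the $z$-variable in the source can be compensated by its inverse in the target.

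Concretely, I would define the source diffeomorphism $\Phi(x,z) = (x,\phi(z))$ on $(\K^n\times\K^d,0)$ and the target diffeomorphism $\Psi(X,w) = (X,\phi^{-1}(w))$ on $(\K^p\times\K^d,0)$. Both are genuine diffeomorphism-germs fixing the origin, since $\phi$ is, so they belong to the factors $\operatorname{Diff}(\K^n\times\K^d,0)$ and $\operatorname{Diff}(\K^p\times\K^d,0)$ defining $\A$. The verification is then a direct substitution:
\begin{align*}
\Psi\circ A_{F,g_2}(f)\circ\Phi(x,z) &= \Psi\big(A_{F,g_2}(f)(x,\phi(z))\big) = \Psi\big(f_{g_2(\phi(z))}(x),\phi(z)\big) \\
&= \big(f_{g_1(z)}(x),z\big) = A_{F,g_1}(f)(x,z),
\end{align*}
where the last line uses $g_2\circ\phi = g_1$. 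This exhibits the pair $(\Phi,\Psi)$ conjugating one augmentation into the other, which is exactly the required $\A$-equivalence.

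There is essentially no serious obstacle here: the whole content is recognising that the augmentation construction keeps the $z$-coordinate as a free parameter, so that the reparametrisation $\phi$ can be absorbed in the source and undone in the target. The only points needing a line of care are checking that $\Phi$ and $\Psi$ are honest invertible base-point-preserving germs — immediate from $\phi\in\operatorname{Diff}(\K^d,0)$ — and matching the construction to the definition of $\A$-equivalence as conjugation by a pair of diffeomorphisms acting on source and target.
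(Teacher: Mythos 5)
Your proof is correct and is essentially identical to the paper's own argument: the same pair $\Phi(x,z)=(x,\phi(z))$, $\Psi(X,w)=(X,\phi^{-1}(w))$ conjugating one augmentation into the other. In fact your bookkeeping is the consistent one --- the paper's displayed computation conjugates $A_{F,g_1}(f)$ but uses $g_1=g_2\circ\phi$ as if it read $g_2=g_1\circ\phi$, a harmless swap of indices that your version avoids.
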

\begin{proof}
Let $\phi\colon(\K^d,0)\to(\K^d,0)$ be a germ of diffeomorphism such that:
$$g_2 = g_1 \circ \phi$$
Define the following germs of diffeomorphism:
\begin{align*}
\Phi & \colon(\K^n\times \K^d,0)\to(\K^n\times \K^d,0)\\
\Psi & \colon(\K^p\times \K^d,0)\to(\K^p\times \K^d,0)
\end{align*}
given by $\Phi(x,z) = (x,\phi(z))$, $\Psi(X,Z) = (X,\phi^{-1}(Z))$. Then:
\begin{equation*}
\begin{split}
\Psi\circ A_{F,g_1}(f)\circ\Phi(x,z) & = \Psi\circ A_{F,g_1}(f)(x,\phi(z)) \\
                                  & = \Psi(f_{g_1(\phi(z))}(x),\phi(z)) \\
                                  & = (f_{g_1(\phi(z))}(x),\phi^{-1}(\phi(z)))\\
                                  & = (f_{g_2(z)}(x),z) = A_{F,g_2}(f)(x,z)
\end{split}
\end{equation*}
so both augmentations are $\A$-equivalent via $\Phi, \Psi$.
\end{proof}

Notice that the same result holds using right and left-right equivalences.

This reduces considerably our task, since when studying the augmenting function it will be enough to check the augmentation via any representative in its $\mathscr{R}$-equivalence class.
Moreover, when a quasihomogeneous representative can be found we can determine exactly the $\A_e$-codimension of the augmentation:

\begin{coro}\label{coro_igualdad_houston}
If either $g$ is $\mathscr R$-equivalent to a quasihomogeneous function, or $F$ is substantial, then:
$$\Aecod\left(A_{F,g}(f)\right) = \Aecod (f) \tau(g)$$
\end{coro}
\begin{proof}
If $F$ is substantial, it is the case of Theorem \ref{thm_houston_ineq}.
Let $\tilde{g}$ be quasihomogeneous and $\mathscr R$-equivalent to $g$:
\begin{equation*}
\begin{aligned}
\Aecod(A_{F,g}(f)) & = \Aecod(A_{F,\tilde{g}}(f)) && \text{(by Proposition \ref{prop_requiv_augm_aequiv})} \\
 &  = \Aecod (f)\tau(\tilde{g}) && \text{(by Theorem \ref{thm_houston_ineq})} \\
 & = \Aecod (f)\tau(g) && \text{(since }g\sim_{\mathscr R} \tilde{g})
\end{aligned}
\end{equation*}
\end{proof}

In the complex case, a partial converse to Proposition \ref{prop_requiv_augm_aequiv} can be obtained in the case that both augmenting functions are $\mathscr R$-equivalent to a quasihomogeneous function.
To see this, first recall that two functions $g,g'\colon(\C^d,0)\to(\C,0)$ are $\mathscr K$-equivalent if there exists a germ of diffeomorphism over $\C^d\times \C$ of the form:
$$\theta(x,y) = (\phi(x),\psi(x,y))$$
such that $\phi$ is a diffeomorphism over $\C^d$, $\psi(x,0) \equiv 0$ and $(\phi(x),g(\phi(x))) = \theta(x,g'(x))$.
This is an equivalence relation, and we can define the following tangent spaces:
\begin{align*}
T \mathscr{K}_e g &= Jg + \langle g\rangle \\
T \mathscr{K} g &= \m_d\cdot Jg + \langle g\rangle
\end{align*}
Respective $\mathscr K$ and $\mathscr{K}_e$-codimensions can be defined analogously to those of $\A$ and $\mathscr R$.

\begin{prop}\label{prop_qh_kr}
Two complex functions, $\mathscr{R}$-equivalent to quasihomogeneous functions and with isolated singularity are $\mathscr K$-equivalent if and only if they are $\mathscr R$-equivalent.
\end{prop}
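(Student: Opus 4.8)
The plan is to prove the two implications separately, the forward one being immediate and the converse resting essentially on quasihomogeneity. The implication $g\sim_{\mathscr R}g' \Rightarrow g\sim_{\mathscr K}g'$ is formal, since $\mathscr R$ sits inside $\mathscr K$: if $g=g'\circ\rho$ for a diffeomorphism $\rho$, then $\theta(x,y)=(\rho^{-1}(x),y)$ (so $\psi(x,y)=y$, hence $\psi(x,0)\equiv0$) is a $\mathscr K$-witness, because $g(\rho^{-1}(x))=g'(x)=\psi(x,g'(x))$. This uses neither quasihomogeneity nor the isolatedness of the singularity.

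For the converse I would first unwind the definition. Writing $\psi(x,y)=y\cdot u(x,y)$, which is possible since $\psi(x,0)\equiv 0$, the requirement that $\theta$ be a diffeomorphism forces $u(0,0)\neq 0$, and the identity $\psi(x,g'(x))=g(\phi(x))$ becomes $g\circ\phi=\bar u\cdot g'$ with $\bar u(x):=u(x,g'(x))$ a unit in $\O_d$. Since $g\circ\phi\sim_{\mathscr R}g$, it suffices to establish: \emph{if $g'$ is quasihomogeneous with isolated singularity and $u\in\O_d$ is a unit, then $u\,g'\sim_{\mathscr R}g'$}. I would then reduce to the case $u(0)=1$. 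Let $c=u(0)\in\K^{*}$ and pick $\lambda$ with $\lambda^{w}=c$, where $w$ is the quasihomogeneous degree of $g'$ for weights $w_1,\dots,w_d$; the linear scaling $\phi_0(x)=(\lambda^{w_1}x_1,\dots,\lambda^{w_d}x_d)$ gives $g'\circ\phi_0=c\,g'$, so $c\,g'\sim_{\mathscr R}g'$, and $c\,g'$ is again quasihomogeneous with the same weights and degree. Replacing $g'$ by $c\,g'$ and $u$ by $u/c$, I may assume $u=1+v$ with $v\in\m_d$.

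The heart of the argument is a homotopy (Moser / Thom--Levine) argument in which quasihomogeneity enters through the Euler vector field $E=\sum_i w_i x_i\,\partial/\partial x_i$, satisfying $E(g')=w\,g'$. Consider $G_t=(1+tv)\,g'$ for $t\in[0,1]$, joining $g'$ to $u\,g'$, and seek a time-dependent germ of vector field $\xi_t$ vanishing at $0$ with $\partial G_t/\partial t+\sum_i\xi_{t,i}\,\partial G_t/\partial x_i=0$. Searching for $\xi_t$ proportional to the Euler field, $\xi_t=a_t E$, reduces this to a single scalar equation, since
\[
E(G_t)=\big[w(1+tv)+t\,E(v)\big]g'\qquad\text{and}\qquad \partial G_t/\partial t=v\,g'.
\]
One is thus led to $a_t=-v/\big[w(1+tv)+t\,E(v)\big]$, whose denominator equals $w\neq 0$ at the origin and is therefore a unit; hence $a_t\in\m_d$ and $\xi_t=a_t E$ is a well-defined germ vanishing at $0$, holomorphic in $x$ and smooth in $t$. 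Integrating $\xi_t$ over $t\in[0,1]$ produces a family of diffeomorphism germs $\Phi_t$ fixing the origin with $G_t\circ\Phi_t=G_0=g'$; at $t=1$ this yields $u\,g'=G_1\sim_{\mathscr R}g'$, which completes the proof.

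I expect the only genuinely delicate point to be the existence and holomorphy of the flow of $\xi_t$: because $\xi_t(0)=0$ and $[0,1]$ is compact, the flow is defined on a fixed neighbourhood of the origin and consists of biholomorphic germs, with integration carried out along the real segment in the parameter $t$; I would state this step carefully. Everything else — the passage from $\mathscr K$-equivalence to the form $u\,g'$, the scaling that disposes of the constant $u(0)$, and the explicit choice of $\xi_t$ along the Euler field — is forced by quasihomogeneity and is elementary, and notably bypasses any appeal to the Mather--Yau theorem or to Saito's $\mu=\tau$ criterion.
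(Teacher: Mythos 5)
Your proof is correct, but it takes a genuinely different route from the paper's. The paper argues via orbit geometry: quasihomogeneity gives the Euler relation $g\in\m_d\cdot Jg$, hence $\tK g=\tR g$ (and likewise for $g'$); the isolated singularity gives finite $\mathscr R$-codimension and determinacy, so both $\mathscr R$-orbits are open, of the same finite codimension, inside the common $\mathscr K$-orbit, and over $\C$ a $\mathscr K$-orbit can contain only one open $\mathscr R$-orbit (citing Theorem 7.1 of \cite{nunomond}). You instead unwind the contact equivalence into the explicit relation $g\circ\phi=\bar u\cdot g'$ with $\bar u$ a unit, and absorb the unit constructively: the constant $\bar u(0)$ by a quasihomogeneous scaling (this is where $\C$ enters, through the root $\lambda^{w}=c$ --- correctly so, since the statement fails over $\R$, e.g.\ $x^2$ versus $-x^2$), and the remaining factor $1+v$ by a Thom--Levine/Moser homotopy along the Euler field, where the ansatz $\xi_t=a_tE$ works precisely because $E(g')=wg'$ lets $g'$ cancel from the defining equation, leaving a unit denominator $w(1+tv)+tE(v)$. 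Your computations check out, and the integration step is the standard one (the field vanishes at the origin, so by compactness of $[0,1]$ the flow exists on a fixed neighbourhood and consists of biholomorphisms fixing $0$). What your approach buys: it is self-contained and constructive, avoiding finite determinacy and the infinite-dimensional orbit argument, and it never actually uses the isolated-singularity hypothesis, since $g'$ cancels in the formula for $a_t$; so you prove a slightly more general statement. What the paper's approach buys: given the cited machinery it is only a few lines, and it stays within the tangent-space/orbit formalism that the rest of the paper (determinacy, codimension counts, versality) already relies on.
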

\begin{proof}
Since $\mathscr R$-equivalence entails $\mathscr K$-equivalence, one implication is trivial.

First we prove the other implication in the case that $g,g'\in \O_d$ are quasihomogeneous.
In this case, $g \in \m_d\cdot Jg$ and so $\tK g = \tR g$.
Similarly, $\tK g' = \tR g'$.
Now, since they have isolated singularity they are of finite $\mathscr R$-codimension, and therefore finitely $\mathscr R$-determined: we have $\Kcod(g) = \Rcod(g) < \infty$ and similarly $\Kcod(g') = \Rcod(g')<\infty$

Following the argument found in Theorem 7.1 from \cite{nunomond} or similarly in Theorem 3.2 from \cite{extranicedim}, both $\mathscr R$-orbits of $g$ and $g'$ are open on the same $\mathscr K$-orbit, all three of them being of the same finite codimension.
Therefore, since over $\C$ there can be only one open $\mathscr R$-orbit in the same $\mathscr K$-orbit, the $\mathscr R$-orbits of $g$ and $g'$ must coincide, and so they are $\mathscr R$-equivalent.

For the general case, assume $\bar{g},\bar{g}'\in\O_d$ are $\mathscr R$-equivalent to quasihomogeneous functions $g,g'\in\O_d$ respectively.
If $\bar{g}, \bar{g}'$ are $\mathscr{K}$-equivalent, $g,g'$ are also $\mathscr K$-equivalent.
Since they are quasihomogeneous, we now have that $g$ and $g'$ are $\mathscr R$-equivalent. Then:
$$\bar g \sim_\mathscr{R} g  \sim_\mathscr{R} g' \sim_\mathscr{R}\bar g'$$

\end{proof}

Therefore we can prove:

\begin{teo} \label{thm_r_iff_a}
Assume $g, g'\colon(\C^d,0)\to(\C,0)$ have an isolated singularity and both are $\mathscr{R}$-equivalent to quasihomogeneous functions.
Then:
$$ A_{F,g}(f) \sim_\A A_{F,g'}(f) \iff g\sim_{\mathscr R}g'$$
\end{teo}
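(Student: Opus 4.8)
The forward implication is already handled: if $g\sim_{\mathscr R}g'$, then Proposition \ref{prop_requiv_augm_aequiv} gives $A_{F,g}(f)\sim_\A A_{F,g'}(f)$ directly. The substance is the converse, so the plan is to assume $A_{F,g}(f)\sim_\A A_{F,g'}(f)$ and deduce $g\sim_{\mathscr R}g'$. My strategy is to reduce $\mathscr R$-equivalence of $g,g'$ to $\mathscr K$-equivalence, since Proposition \ref{prop_qh_kr} then closes the gap for free: both functions are $\mathscr R$-equivalent to quasihomogeneous germs, so once I know they are $\mathscr K$-equivalent I can pass to quasihomogeneous representatives and invoke Proposition \ref{prop_qh_kr} to upgrade $\mathscr K$ to $\mathscr R$. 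Thus the real target becomes: \emph{an $\A$-equivalence of the two augmentations forces a $\mathscr K$-equivalence of the augmenting functions.}

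To extract such a $\mathscr K$-equivalence, I would exploit the structure of the augmentation $A_{F,g}(f)(x,z)=(f_{g(z)}(x),z)$. The key geometric observation is that the augmenting function $g$ controls where in the target parameter direction the fibre sits singular, and an $\A$-equivalence $\Psi\circ A_{F,g}(f)=A_{F,g'}(f)\circ\Phi$ must be compatible with this structure. Writing the source diffeomorphism as $\Phi(x,z)=(X(x,z),Z(x,z))$ and the target one as $\Psi$, I would first argue that, up to the trivial-unfolding part, the $z$-component $Z(0,z)=\phi(z)$ defines a diffeomorphism of $(\K^d,0)$ that relates $g$ and $g'$. Concretely, comparing the parameter slices: the fibre of $A_{F,g}(f)$ over a parameter value $z$ reproduces $f$ deformed by $g(z)$ via the stable unfolding $F$, and since $F$ is a \emph{stable} unfolding the value $g(z)$ is recoverable (up to the contact with the discriminant/instability locus of $F$) as an invariant of the fibre. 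Matching fibres through $\Phi,\Psi$ should therefore force $g'\circ\phi$ and $g$ to agree up to a unit and a base change, which is precisely a $\mathscr K$-equivalence $g\sim_{\mathscr K}g'$.

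The technical heart — and the step I expect to be the main obstacle — is making precise the claim that \emph{the level $g(z)$ is read off from the fibre as a $\mathscr K$-invariant}. The cleanest route is to pull back the parametrised structure: the discriminant (or the non-stable locus) of the big stable map $F$ in $(\K^p\times\K,0)$ is a hypersurface defined by some equation $h(X,\lambda)=0$ with $h$ depending genuinely on $\lambda$, and the augmentation pulls this back along $z\mapsto g(z)$. An $\A$-equivalence of augmentations carries this pulled-back hypersurface to the corresponding one for $g'$, which yields an identity of the form $h(\,\cdot\,,g(z))=\text{unit}\cdot h(\Phi\text{-image})$; projecting onto the $\lambda$-factor converts this into $\psi(z,g(z))=g'(\phi(z))$ for a suitable $\psi$ with $\psi(z,0)\equiv 0$, which is exactly the defining condition of a $\mathscr K$-equivalence between $g$ and $g'$. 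The delicate points are verifying that $\phi$ is genuinely a diffeomorphism (nondegeneracy of the parameter matching, which uses that $g,g'$ have isolated singularities and hence are finitely determined) and that the $\lambda$-dependence of the instability locus of the \emph{stable} germ $F$ is faithful enough to recover $g(z)$ rather than merely its vanishing; here substantiality of $F$ is not assumed, so one must instead lean on stability of $F$ together with the hypothesis that both $g,g'$ are $\mathscr R$-equivalent to quasihomogeneous germs to guarantee the reconstruction. Once the $\mathscr K$-equivalence is secured, Proposition \ref{prop_qh_kr} finishes the argument immediately.
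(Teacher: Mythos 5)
Your overall architecture coincides exactly with the paper's: the forward implication via Proposition \ref{prop_requiv_augm_aequiv}, and for the converse, reduce to showing that $\A$-equivalence of the two augmentations forces $\mathscr K$-equivalence of $g$ and $g'$, then upgrade $\mathscr K$-equivalence to $\mathscr R$-equivalence via Proposition \ref{prop_qh_kr}. The difference is in how the middle step is handled: the paper does not prove it at all, it cites Proposition 3.4 of \cite{ORW2}, which is verbatim the statement you set out to establish. You instead attempt a direct geometric argument, and that attempt is where the gap lies.

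Concretely, your sketch leaves unproved precisely the points you flag yourself. First, an $\A$-equivalence $\Psi\circ A_{F,g}(f) = A_{F,g'}(f)\circ\Phi$ involves arbitrary germs of diffeomorphisms of $(\C^{n+d},0)$ and $(\C^{p+d},0)$, which need not respect the product structures $\C^n\times\C^d$ and $\C^p\times\C^d$; so the claim that some slice of $\Phi$ or $\Psi$ induces a diffeomorphism $\phi$ of $(\C^d,0)$ intertwining the two augmenting functions requires a real argument (for instance, that the equivalence preserves an intrinsically defined locus which fibres over the parameter space), and none is supplied. Second, even granting that the instability locus of $A_{F,g}(f)$ is carried onto that of $A_{F,g'}(f)$, what this yields at best is an ambient diffeomorphism of the zero loci $V(g)$ and $V(g')$; passing from that to the ideal-level identity $g'\circ\phi = \text{unit}\cdot g$, i.e. genuine $\mathscr K$-equivalence, needs reducedness of the hypersurfaces plus a Nullstellensatz-type argument (and reducedness fails outright for $d=1$, $g=z^k$), or else a finer analysis recovering the scheme structure --- this is the ``technical heart'' you acknowledge but do not resolve. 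A minor additional confusion: quasihomogeneity of $g,g'$ plays no role in this middle step (it is not assumed in \cite{ORW2}); it is needed only afterwards, in Proposition \ref{prop_qh_kr}. Since the implication ``$\A$-equivalence of augmentations $\Rightarrow$ $\mathscr K$-equivalence of augmenting functions'' is a nontrivial theorem which your proposal neither proves nor cites, the proposal as written is incomplete; it becomes a correct proof, structurally identical to the paper's, once the heuristic middle section is replaced by the citation of Proposition 3.4 of \cite{ORW2}.
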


\begin{proof}
One implication has been proven in Proposition \ref{prop_requiv_augm_aequiv}.
To check the other one, assume that $A_{F,g}(f) \sim_\A A_{F,g'}(f)$.
Proposition 3.4 from \cite{ORW2} ensures $g$ and $g'$ are $\mathscr K$-equivalent.
Therefore, by Proposition \ref{prop_qh_kr}, $g$ and $g'$ are $\mathscr R$-equivalent.
\end{proof}

\begin{lem}\label{lemma_leq_qh}
Let $g\colon(\C^d,0)\to(\C,0)$ be a smooth function with isolated singularity and $\mathscr R$-equivalent to a quasihomogeneous function, and let $\alpha\colon(\C,0)\to(\C,0)$ be a germ of diffeomorphism.
Then $\alpha\circ g$ is $\mathscr R$-equivalent to a quasihomogeneous function.
\end{lem}
\begin{proof}
It is enough to prove this result for the case that $g$ is quasihomogeneous.
Since $\alpha$ is a diffeomorphism, $J(\alpha\circ g) = Jg$, and so $\mu(g) = \mu(\alpha\circ g)$.
Now, since $\alpha\circ g$ can be expanded as a polynomial in $g$, we have that $\alpha\circ g \in \langle g\rangle \subseteq Jg$. This implies that $\tau(\alpha\circ g) = \tau(g) = \mu(g) = \mu(\alpha\circ g)$. Saito's Lemma (\cite{saito}) now ensures $\alpha\circ g$ is $\mathscr R$-equivalent to a quasihomogeneous function. 
\end{proof}

In \cite{robertamond}, the authors prove that when $\K = \C$ and $\Aecod(f) = 1$, augmentation via a quadratic function is independent of the chosen OPSU.
Their proof can be in fact extended to augmentation of an $\A_e$-codimension 1 germ via any function $\mathscr R$-equivalent to a quasihomogeneous function with the help of the above results:

\begin{prop}\label{thm_cod1_opsu}
If $\Aecod(f) = 1$, the augmentation of $f$ via a function $g$ that is $\mathscr R$-equivalent to a quasihomogeneous function does not depend on the chosen OPSU $F$: it depends on the $\A$-class of $f$ and the $\mathscr R$-class of $g$.
\end{prop}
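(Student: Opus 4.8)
The plan is to reduce the statement to the uniqueness of miniversal unfoldings. The crucial observation is that when $\Aecod f = 1$ every OPSU is automatically miniversal: an OPSU is a stable $1$-parameter unfolding, a stable unfolding is always versal, and a versal unfolding has at least $\Aecod f = 1$ parameters, so a $1$-parameter versal unfolding is miniversal. Consequently, given two OPSUs $F$ and $F'$ of $f$, the standard uniqueness theorem for miniversal unfoldings (the uniqueness part of the versality characterisation \eqref{eq_versal_char}) provides an isomorphism of unfoldings between them: a base-change diffeomorphism $\chi\colon(\K,0)\to(\K,0)$ together with $\lambda$-dependent germs of diffeomorphisms $\phi_\lambda,\psi_\lambda$ (with $\phi_0,\psi_0$ the identities) such that $f'_\lambda=\psi_\lambda^{-1}\circ f_{\chi(\lambda)}\circ\phi_\lambda^{-1}$.

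First I would turn this isomorphism of unfoldings into an $\A$-equivalence of augmentations. Setting $\lambda=g(z)$ and using that $g(0)=0$, the germs $\tilde\Phi(x,z)=(\phi_{g(z)}^{-1}(x),z)$ and $\tilde\Psi(y,z)=(\psi_{g(z)}^{-1}(y),z)$ are diffeomorphisms (their Jacobians at the origin are block-triangular with identity diagonal blocks, since $\phi_0=\psi_0=\mathrm{id}$), and a substitution exactly of the type carried out in the proof of Proposition \ref{prop_requiv_augm_aequiv} gives
\begin{equation*}
\tilde\Psi\circ A_{F,\chi\circ g}(f)\circ\tilde\Phi=A_{F',g}(f),\qquad\text{so}\qquad A_{F',g}(f)\sim_\A A_{F,\chi\circ g}(f).
\end{equation*}
Thus $F$ and $F'$ produce $\A$-equivalent augmentations as soon as $\chi\circ g\sim_{\mathscr R}g$.

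The main obstacle, and the step where quasihomogeneity is essential, is precisely the claim $\chi\circ g\sim_{\mathscr R}g$. Since $\chi(0)=0$ and $\chi'(0)\neq0$, we may write $\chi\circ g=u\cdot g$ for a unit $u\in\O_d$. Because right composition commutes with post-composition by $\chi$, we may use Proposition \ref{prop_requiv_augm_aequiv} and assume $g$ itself is quasihomogeneous with isolated singularity. The Euler relation then yields $g\in\m_d\cdot Jg$, from which one checks $u\cdot g\in J(u\cdot g)$; hence $u\cdot g$ is again $\mathscr R$-equivalent to a quasihomogeneous germ, which in turn is $\mathscr K$-equivalent to $g$ since contact equivalence absorbs unit multiples. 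Proposition \ref{prop_qh_kr} upgrades this $\mathscr K$-equivalence to an $\mathscr R$-equivalence, giving $\chi\circ g=u\cdot g\sim_{\mathscr R}g$. (Alternatively, one trivialises the family $(1+t(u-1))g$ by the homotopy method, using $g\in\m_d\cdot Jg$.) A final application of Proposition \ref{prop_requiv_augm_aequiv} then gives $A_{F,\chi\circ g}(f)\sim_\A A_{F,g}(f)$, and combining with the previous display establishes independence from the chosen OPSU.

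Finally, for the \emph{moreover} assertion I would show that $\A$-equivalence of the augmented maps is compatible with augmentation. If $f'=\beta\circ f\circ\alpha^{-1}$ and $F$ is any OPSU of $f$, then $\bar F(x,\lambda)=(\beta(f_\lambda(\alpha^{-1}(x))),\lambda)$ unfolds $f'$ and, being $\A$-equivalent to $F$, is stable, hence an OPSU of $f'$; the $z$-independent diffeomorphisms $(x,z)\mapsto(\alpha^{-1}(x),z)$ and $(y,z)\mapsto(\beta(y),z)$ then give $A_{F,g}(f)\sim_\A A_{\bar F,g}(f')$. Since $\Aecod f'=1$ as well, the OPSU-independence just proved lets one replace $\bar F$ by an arbitrary OPSU of $f'$, so the $\A$-equivalence class of the augmentation depends only on the $\A$-equivalence class of $f$.
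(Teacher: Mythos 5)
Your proof is correct and follows essentially the same route as the paper's: both exploit that any OPSU of an $\Aecod$-$1$ germ is a miniversal unfolding, relate two OPSUs through a base-change diffeomorphism (your $\chi$, the paper's $\alpha$) together with unfoldings of the identity, substitute $t=g(z)$ to turn this into an $\A$-equivalence of augmentations, and settle the key point $\chi\circ g\sim_{\mathscr R}g$ by reducing to a quasihomogeneous representative via Proposition \ref{prop_requiv_augm_aequiv} and invoking Proposition \ref{prop_qh_kr}, with the ``moreover'' part handled identically by conjugating an OPSU of $f$ into an OPSU of the equivalent germ. If anything, your treatment of the step $\chi\circ g\sim_{\mathscr R}g$ (writing $\chi\circ g=u\cdot g$ and using the Euler relation/Saito criterion before applying Proposition \ref{prop_qh_kr}) is slightly more careful than the paper's, which applies that proposition directly to $\alpha\circ g$ even though that germ is only $\mathscr R$-equivalent to a quasihomogeneous one rather than quasihomogeneous itself.
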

\begin{proof}
By Proposition \ref{prop_requiv_augm_aequiv} it suffices to consider the case that $g$ is quasihomogeneous.
Let $F(x,t) = (f_t(x),t)$ and $\tilde{F}(x,t) = (\tilde{f}_t(x),t)$ be two different OPSUs of $f$.

Since $\Aecod(f) = 1$, they are also versal unfoldings: therefore, there must exist some germ of diffeomorphism $\alpha\colon(\C,0)\to(\C,0)$ and two unfoldings of the identity maps, $\Phi(x,t) = (\phi_t(x),t)$ and $\Psi(X,T) = (\psi_T(X),T)$ respectively such that:
\begin{equation}\label{eq_cod1_opsu_ind}
(f_{\alpha(t)}(x),t) = \alpha^* F(x,t) = \Psi\circ \tilde{F}\circ \Phi(x,t)= (\psi_t\circ \tilde{f}_t\circ\phi_t(x),t)
\end{equation}

By Lemma \ref{lemma_leq_qh}, $\alpha\circ g$ is $\mathscr R$-equivalent to a quasihomogeneous function. Since $\alpha\circ g$ is $\A$-equivalent to $g$ (and therefore $\mathscr K$-equivalent to $g$),  Propositon \ref{prop_qh_kr} now ensures there exists another diffeomorphism $\beta\colon(\C^d,0)\to(\C^d,0)$ such that $\alpha\circ g = g \circ \beta$.

By Proposition \ref{prop_requiv_augm_aequiv}, we have that:
$$A_{F,g}(f) \sim_{\A} A_{F,g\circ\beta}(f)$$
And by Equation (\ref{eq_cod1_opsu_ind}):
\begin{equation*}
\begin{split}
A_{F,g\circ\beta}(f)(x,z)
   & = (f_{g(\beta(z))},z)\\
   & = (f_{\alpha(g(z))},z)\\
   & = (\psi_{g(z)}\circ \tilde{f}_{g(z)}\circ \phi_{g(z)}(x),z) \\
   & = A_{\Psi,g}(\text{Id}_p)\circ A_{\tilde{F},g}(f) \circ A_{\Phi,g}(\text{Id}_n)
\end{split}
\end{equation*}
Which implies:
$$A_{F,g}(f) \sim_{\A} A_{\tilde{F},g}(f)$$

To check the second statement, consider $f$ and $h$ two $\A$-equivalent map-germs, and let $F(x,t) = (f_t(x),t)$ and $H(x,t) = (h_t(x),t)$ be two OPSUs of $f$ and $h$ respectively.
There must exist certain germs of diffeomorphism $\phi\colon(\C^n,0)\to(\C^n,0)$, $\psi\colon(\C^p,0)\to(\C^p,0)$ such that $f = \psi\circ h\circ \phi$.
Let $\Phi(x,t) = (\phi(x),t)$ and $\Psi(X,T) = (\psi(X),T)$, which are germs of diffeomorphism.
Note that $\Psi\circ H \circ \Phi (x,t) = (\psi\circ h_t \circ \phi(x),t)$ is an OPSU of $f$, and therefore:
$$A_{F,g}(f) \sim_\A A_{\Psi\circ H\circ \Phi, g}(f)$$
Now just check that:
$$A_{\Psi\circ H\circ \Phi, g}(f)(x,z) = (\psi\circ h_{g(z)}\circ \phi(x),z) = \Psi \circ A_{H,g}(h) \circ \Phi$$
We have $A_{F,g}(f) \sim_\A A_{H,g}(h)$.
\end{proof}

\section{The Versal Unfoldings}\label{versal}

Since our objective is checking when an augmentation is simple, the first step will be constructing an adequate versal unfolding.
We want to do this by finding a basis for the $\A_e$-normal space of the augmentation, and so the first results of this section are focused on finding some independent generators in the case that $\K=\C$.

Throughout the section we assume that $f\colon(\C^n,0)\to(\C^p,0)$ is a smooth map-germ with $\Aecod(f)=k$, that admits an OPSU $F(x,\lambda) = (f_\lambda(x),\lambda)$.
Let $g\colon(\C^d,0)\to(\C,0)$ be a smooth function with isolated singularity, and consider the augmentation $A_{F,g}(f)$.

For any $\gamma\in\theta(f)$, $(\gamma,0,\overset{d}\ldots,0)$ can be seen as a vector field along $A_{F,g}(f)$.
Moreover, we can multiply any of these vector fields by any function $\tau\in\O_d$ and they remain as vector fields along the augmentation.
We write the extended vector fields in this way:
\begin{align*}
\begin{bmatrix}
\tau(z)\gamma(x) \\
0
\end{bmatrix} & \in \theta(A_{F,g}(f))
\end{align*}
Notice that we can always take a monomial $\C$-basis of the quotient space $\frac{\O_d}{\langle g\rangle +Jg}$.

\begin{teo}\label{thm_basis_main}
Let $g\in\O_d$ and suppose $\tau_1,\ldots,\tau_r\in\O_d$ form a monomial $\C$-basis of the space $\frac{\O_d}{\langle g\rangle + Jg}$.
Assume that $\gamma\in\theta(f)$ is a vector field along $f$ such that, for some $s=1,\ldots,r$:
\begin{align*}
\begin{bmatrix}
\tau_s(z)\gamma(x) \\
0
\end{bmatrix}\in\tae (A_{F,g}(f))
\end{align*}
Then $\gamma\in\tae f$.
\end{teo}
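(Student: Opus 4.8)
The plan is to write the hypothesis $\left[\begin{smallmatrix}\tau_s\gamma\\0\end{smallmatrix}\right]\in\tae(A_{F,g}(f))$ out explicitly and then reduce it modulo a single well-chosen ideal that collapses everything except the information we want.

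First I would expand the membership condition. Writing a source field as $(\xi,\zeta)\in\theta_{n+d}$ (with $\xi$ the $n$ first components and $\zeta$ the $d$ last) and a target field as $(\eta,\chi)\in\theta_{p+d}$ (with $\eta$ the $p$ first and $\chi$ the $d$ last), a direct computation of $t(A_{F,g}(f))(\xi,\zeta)+\omega(A_{F,g}(f))(\eta,\chi)$ gives, in the first $p$ coordinates,
\[
\tau_s(z)\gamma(x)=tf_{g(z)}(\xi)+\Big(\sum_j\zeta_j\,\dpar{g}{z_j}\Big)\fpdat{f_\lambda}{\lambda}{\lambda=g(z)}+\eta(f_{g(z)}(x),z),
\]
and, in the last $d$ coordinates, $0=\zeta+\chi(f_{g(z)}(x),z)$. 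The second identity forces $\zeta=-\chi\circ(f_{g(z)},z)$, and substituting this into the first shows that the coefficient multiplying $\dpar{f_\lambda}{\lambda}$ becomes $-\sum_j(\chi_j\circ(f_{g(z)},z))\,\dpar{g}{z_j}$, which lies in the ideal $Jg\cdot\O_{n+d}$. This is the crucial structural point: once the augmenting coordinates are annihilated by the bottom equation, the unfolding direction $\dpar{f_\lambda}{\lambda}$ can only re-enter the top equation with coefficients in $Jg$.

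Next I would reduce this single identity, which lives in $\O_{n+d}^{\,p}$, modulo the ideal $I:=(g,\,\dpar{g}{z_1},\dots,\dpar{g}{z_d})\,\O_{n+d}$ generated by $g$ and $Jg$. Since $g$ has an isolated singularity, $\O_d/(g+Jg)$ is finite dimensional with monomial basis $\tau_1,\dots,\tau_r$, and hence $\O_{n+d}/I\cong\O_n\otimes_\K(\O_d/(g+Jg))=\bigoplus_{s'=1}^r\O_n\,\tau_{s'}$ is a free $\O_n$-module with basis $\{\tau_{s'}\}$. Under this reduction the $\dpar{f_\lambda}{\lambda}$-term vanishes (its coefficient is in $I$); moreover $f_{g(z)}\equiv f$ and $\partial_{x_i}f_{g(z)}\equiv\partial_{x_i}f \pmod I$, because they differ from $f,\partial_{x_i}f$ by multiples of $g$, so that $tf_{g(z)}(\xi)\equiv tf(\xi)$ and $\eta(f_{g(z)},z)\equiv\eta(f,z)\pmod I$. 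Writing the reduction of $\xi$ in the basis $\{\tau_{s'}\}$, and reducing $\eta$ via the free-module identity $\O_{p+d}/((g+Jg)\O_{p+d})=\bigoplus_{s'}\O_p\,\tau_{s'}$ followed by precomposition with $f\times\mathrm{id}$ (which maps $(g+Jg)\O_{p+d}$ into $I$), the right-hand side becomes $\sum_{s'}\tau_{s'}\rho_{s'}$ with each $\rho_{s'}=tf(\xi^{(s')})+\omega f(\eta^{(s')})\in\tae f$.

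Finally I would compare the $\tau_s$-components. The left-hand side $\tau_s(z)\gamma(x)$ reduces to $\gamma$ in the slot $\tau_s$ and to $0$ in every other slot, while the slot-$\tau_s$ component of the right-hand side is $\rho_s\in\tae f$; since $\bigoplus_{s'}\O_n^{\,p}\tau_{s'}$ is a direct sum, matching the $\tau_s$ coefficients yields $\gamma=\rho_s\in\tae f$, as desired. The main obstacle is not the computation but the module-theoretic bookkeeping: one must justify carefully that $\O_{n+d}/I$ is free over $\O_n$ on $\{\tau_{s'}\}$ and, above all, that composing with $f$ and applying $tf$ descend to this quotient and send the $\eta$- and $\xi$-terms into $\sum_{s'}\tau_{s'}\,\tae f$ with no spurious $z$-dependence surviving. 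This is precisely what makes a hypothesis on the single basis monomial $\tau_s$ collapse onto the clean conclusion $\gamma\in\tae f$; the conceptual heart, by contrast, is the earlier observation that eliminating the augmenting coordinates confines the $\dpar{f_\lambda}{\lambda}$ direction to coefficients in $Jg$.
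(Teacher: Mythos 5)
Your proof is correct and takes essentially the same approach as the paper's: reduce the first $p$ components of the tangent-space equation modulo the ideal $(g+Jg)\O_{n+d}$, noting that the unfolding direction $\fpdat{f_\lambda}{\lambda}{\lambda=g(z)}$ enters only with coefficients in $Jg\cdot\O_{n+d}$ and that all remaining $z$-dependence of $f_{g(z)}$, its $x$-derivatives, and the composed target field is a multiple of $g$ (Hadamard's Lemma), then compare $\tau_s$-coefficients using that $\{\tau_{s'}\}$ generates the quotient freely over $\O_n$ --- the paper carries out exactly this via explicit expansions of $\xi$ and $\eta$, writing it out only for the linear OPSU $f_\lambda=f+\lambda\tilde f$ and declaring the general case analogous, whereas you handle a general OPSU directly. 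The one cosmetic point: your substitution $\zeta=-\chi\circ(f_{g(z)},z)$ from the bottom $d$ equations is unnecessary, since $\sum_j\zeta_j\,\dpar{g}{z_j}$ lies in $Jg\cdot\O_{n+d}$ for arbitrary $\zeta$, which is all that your argument (and the paper's) actually uses.
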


\begin{proof}
Let $F$ be of the form $F(x,\lambda) = (\bar{f}(x,\lambda),\lambda)$ with $\bar f(x,0) = f(x)$.
Then, by Hadamard's Lemma (see Appendix C.1 in \cite{nunomond}), there must exist a smooth map-germ $\tilde f\colon(\K^n\times\K,0)\to(\K^p,0)$ such that:
\begin{equation}\label{eq_barf}
\bar f (x,\lambda) = f(x) + \lambda\tilde f(x,\lambda)
\end{equation}
Denote the augmentation of $f$ by $F$ via $g$ as:
$$A(x,z) = A_{F,g}(f)(x,z) = (\bar f(x,g(z)),z)$$
By hypothesis, there exist vector fields $\xi\in\theta_{n+d}$ and $\eta\in\theta_{p+d}$ such that:
\begin{align}\label{eq_in_taea}
\begin{bmatrix}
\tau_s\gamma \\
0
\end{bmatrix} = dA\circ\xi +\eta\circ A
\end{align}
We will prove our result by expanding each component from the right-hand side of this equation as polynomials in $z$ with coefficients in $\O_n$.
First, using the chain rule we check that:
\[
dA|_{(x,z)} = \left(\begin{array}{@{}cc@{}}
  d_x\bar{f}|_{(x,g(z))}
  & d_\lambda\bar f|_{(x,g(z))}\circ dg|_z \\
  0 &
  \operatorname{Id}_d
\end{array}\right)
\]
Here $d_x\bar f$ and $d_\lambda\bar f$ are the differential matrices of $\bar f$ with respect to $x$ and $\lambda$ respectively.
Set $\xi = (\xi_1,\ldots,\xi_{n+d})$ and $\eta=(\eta_1,\ldots,\eta_{p+d})$.
Denote $\xi_{1,\ldots,n}$ and $\xi_{n+1,\ldots,n+d}$ as the first $n$ and last $d$ components of $\xi$ respectively.
Define $\eta_{1,\ldots,p}$ and $\eta_{p+1,\ldots,p+d}$ analogously.
We can rewrite the first $p$ components in Equation (\ref{eq_in_taea}) as:
\begin{multline} \label{eq_expansion}
\tau_s(z)\gamma(x) = d_x\bar{f}|_{(x,g(z))}(\xi_{1,\ldots,n})(x,z) + d_\lambda\bar f|_{(x,g(z))}\circ dg|_z(\xi_{n+1,\ldots,n+d})(x,z) \\ + \eta_{1,\ldots,p}(\bar{f}(x,g(z)),z)
\end{multline}

Since $\tau_1,\ldots,\tau_r$ form a $\C$-basis of $\frac{\O_d}{\langle g\rangle + Jg}$, we can expand each component of $\xi$ as a polynomial over $z$ in the following way:
\begin{equation*}
\xi_i(x,z) = \sum_{j=1}^r\tau_j(z)\xi_i^{(j)}(x) + \tilde{\xi_i}(x,z)
\end{equation*}
with $\tilde{\xi_i}\in ( \langle g\rangle + Jg)\cdot\O_{n+d}$ and $\xi_i^{(j)}\in\O_n$, for each $i=1,\ldots,n+d$ and $j=1,\ldots,r$.
Now, using the expansion of $\bar f$ from Equation (\ref{eq_barf}), the first summand in Equation (\ref{eq_expansion}) can be rewritten as:
\begin{multline}\label{eq_xiexpansion}
d_x\bar{f}|_{(x,g(z))}(\xi_{1,\ldots,n}) = 
\sum_{j=1}^r\tau_j(z)\cdot d_xf|_x(\xi_{1,\ldots,n}^{(j)}(x)) + d_xf|_x(\tilde\xi_{1,\ldots,n})(x,z)\\
 + g(z)\cdot d_x\tilde f|_{(x,g(z))}(\xi_{1,\ldots,n})
\end{multline}
All the components in the last two summands of this equation are elements of $(\langle g\rangle + Jg)\cdot\O_{n+d}$.

We can expand $\eta$ in a similar way. 
Before composition with $A$, for each $i=1,\ldots,p+d$:
\begin{equation*}
\eta_i(X,Z) = \sum_{j=1}^r\tau_j(Z)\eta_i^{(j)}(X) + \tilde{\eta_i}(X,Z)
\end{equation*}
for some $\tilde{\eta}_i\in(\langle g\rangle + Jg)\cdot\O_{p+d}$ and some $\eta_i^{(j)}\in\O_p$ for $j=1,\ldots,r$.
After composing with $A$, this yields:
\begin{equation*}
\eta_i\circ A(x,z) = \sum_{j=1}^r\tau_j(z)\eta_i^{(j)}(f(x)+g(z)\tilde{f}(x,g(z))) + \tilde{\eta_i}\circ A(x,z)
\end{equation*}
Notice that $\tilde{\eta}_i\circ A\in( \langle g\rangle + Jg)\cdot\O_{n+d}$.
Considering $\eta_i^{(j)}(f(x)+\lambda\tilde{f}(x,\lambda))$ for each $j=1,\ldots,r$ as an element of $\O_{n+1}$, Hadamard's Lemma ensures there must exist a certain $\tilde{\eta}_i^{(j)}\in\O_{n+1}$  such that:
$$\eta_i^{(j)}(f(x)+\lambda\tilde{f}(x)) = \eta_i^{(j)}(f(x)) + \lambda\tilde{\eta}_i^{(j)}(x,\lambda)$$
And therefore:
\begin{equation}\label{eq_etaexpansion}
\eta_i\circ A(x,z) = \sum_{j=1}^r\tau_j(z)\eta_i^{(j)}(f(x)) + g(z)\sum_{j=1}^r\tilde{\eta}_i^{(j)}(x,g(z)) + \tilde{\eta_i}\circ A(x,z)
\end{equation}
Finally, we apply the expansions obtained in Equations (\ref{eq_xiexpansion}), (\ref{eq_etaexpansion}) to Equation (\ref{eq_expansion}) and look for the coefficient of $\tau_s$ in both sides.
Notice that anything multiplied by a function in the ideal $\langle g \rangle + Jg$ has coefficient $0$ for $\tau_s$, since it is a monomial with non-zero class over $\frac{\O_d}{\langle g\rangle + Jg}$.
In particular, all components in the second summand of (\ref{eq_expansion}) are multiplied by an element of $Jg$.
This step could also be applied to a non-monomial basis, but extra care should be taken when choosing representatives.
Gathering all the components, we get:
\begin{equation*}
\gamma(x) = \sum_{i=1}^n\dpar{f}{x_i}(x)\xi_i^{(s)}(x) + \eta_{1,\ldots,p}^{(s)}(f(x))\in\tae f
\end{equation*}
with $\eta_{1,\ldots,p}^{(s)} = (\eta_{1}^{(s)},\ldots,\eta_{p}^{(s)})$.

\end{proof}

As a corollary we obtain a quick method to construct an independent set of vector fields over $\nae \left(A_{F,g}(f)\right)$ in terms of those of $\nae f$.
When $g$ is $\mathscr R$-equivalent to a quasihomogeneous function, or $F$ is substantial, by Corollary \ref{coro_igualdad_houston} this set will in fact be a basis and so we will have a versal unfolding.
\begin{coro}\label{coro_augm_generadores}
Let $\gamma_1,\ldots,\gamma_k\in\theta(f)$ be a $\C$-basis for $\nae f$, and $\tau_1,\ldots,\tau_r$ a monomial $\C$-basis for $\frac{\O_d}{ \langle g\rangle +Jg}$.
The following vector fields are all non-zero and form an independent set over $\nae \left(A_{F,g}(f)\right)$:
$$\left\lbrace
\begin{bmatrix}
\tau_s(z)\gamma_m(x) \\
0
\end{bmatrix}
\right\rbrace_{\substack{m=1,\ldots,k \\ s = 1,\ldots,r}}
$$
\end{coro}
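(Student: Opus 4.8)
The plan is to deduce both assertions directly from Theorem \ref{thm_basis_main}, which is the key technical input. There are two things to verify: that none of the listed vector fields vanishes in $\nae(A_{F,g}(f))$, and that they are $\C$-linearly independent there. I will treat the non-vanishing first, since it is an immediate contrapositive of the theorem, and then handle independence, which needs the theorem in a slightly sharpened but cost-free form.

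For the non-vanishing, I would argue by contradiction. Fix a pair $(m,s)$ and suppose that $\begin{bmatrix}\tau_s\gamma_m \\ 0\end{bmatrix}$ represents the zero class, i.e.\ that it lies in $\tae(A_{F,g}(f))$. Then Theorem \ref{thm_basis_main}, applied with $\gamma=\gamma_m$ and this particular $\tau_s$, forces $\gamma_m\in\tae f$. But $\gamma_m$ was chosen as an element of a $\C$-basis of $\nae f=\theta(f)/\tae f$, so it represents a nonzero class and cannot lie in $\tae f$; contradiction. Hence every $\begin{bmatrix}\tau_s\gamma_m \\ 0\end{bmatrix}$ is nonzero in the normal space.

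For independence, suppose a relation $\sum_{m,s} c_{m,s}\begin{bmatrix}\tau_s\gamma_m \\ 0\end{bmatrix}\in\tae(A_{F,g}(f))$ holds with $c_{m,s}\in\C$. I would group the coefficients by $s$ and set $\gamma^{(s)}=\sum_{m=1}^k c_{m,s}\gamma_m\in\theta(f)$, so that the relation becomes $\begin{bmatrix}\sum_{s=1}^r\tau_s\gamma^{(s)} \\ 0\end{bmatrix}\in\tae(A_{F,g}(f))$. Then I would rerun the coefficient-extraction from the proof of Theorem \ref{thm_basis_main}: writing this element as $dA\circ\xi+\eta\circ A$ and expanding $\xi$ and $\eta\circ A$ against the monomial basis $\tau_1,\ldots,\tau_r$ of $\O_d/(g+Jg)$, every contribution carrying a factor in the ideal $(g+Jg)$ (in particular the terms with $g(z)$ and with $\partial g/\partial z_i$) has vanishing $\tau_s$-coefficient. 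Comparing the coefficient of a fixed $\tau_s$ on both sides then yields $\gamma^{(s)}=\sum_i\dpar{f}{x_i}\xi_i^{(s)}+\eta_{1,\ldots,p}^{(s)}(f)\in\tae f$, exactly as in that proof. Since $\gamma_1,\ldots,\gamma_k$ is a basis of $\nae f$, the containment $\sum_m c_{m,s}\gamma_m\in\tae f$ forces $c_{m,s}=0$ for all $m$; as $s$ was arbitrary, all $c_{m,s}$ vanish and the set is independent.

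The one point requiring care, and the main obstacle, is that Theorem \ref{thm_basis_main} is literally stated for a single monomial $\tau_s$ multiplying a single vector field $\gamma$, whereas independence confronts us with the mixed sum $\sum_s\tau_s\gamma^{(s)}$, to which the theorem does not apply verbatim. The resolution is that the proof of Theorem \ref{thm_basis_main} never uses that only one $\tau_s$ appears on the left-hand side: it isolates the coefficient of one chosen $\tau_s$, and this is legitimate precisely because the $\tau_j$ form a $\C$-basis of $\O_d/(g+Jg)$ and so are linearly independent modulo $(g+Jg)$. Thus the same extraction applied to $\sum_s\tau_s\gamma^{(s)}$ recovers $\gamma^{(s)}$ for each $s$ and delivers $\gamma^{(s)}\in\tae f$, which is all that is needed. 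For completeness I would note that, when $g$ is $\mathscr R$-equivalent to a quasihomogeneous function or $F$ is substantial, the set has exactly $kr=\Aecod(f)\,\tau(g)=\Aecod(A_{F,g}(f))$ elements by Corollary \ref{coro_igualdad_houston}, so being independent it is automatically a basis, which is what gives the versal unfolding used in the sequel.
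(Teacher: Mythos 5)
Your proposal is correct and is built on the same two pillars as the paper's own proof: non-vanishing via the contrapositive of Theorem \ref{thm_basis_main}, and independence via the coefficient-extraction technique from that theorem's proof. The difference is in how independence is handled, and here your argument is actually more complete than the paper's. The paper only tests differences of two elements: it assumes that $\tau_s\gamma_m-\tau_{s'}\gamma_{m'}$ (extended by zeros) lies in $\tae\left(A_{F,g}(f)\right)$ for some $(m,s)\neq(m',s')$, applies Theorem \ref{thm_basis_main} when $s=s'$, and when $s\neq s'$ appeals to ``a similar proceeding'' to the theorem's proof to get $\gamma_m,\gamma_{m'}\in\tae f$. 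Taken literally, this only shows that the classes are nonzero and pairwise distinct, which is weaker than linear independence. You instead take an arbitrary relation $\sum_{m,s}c_{m,s}\,\tau_s\gamma_m\in\tae\left(A_{F,g}(f)\right)$, regroup it as $\sum_s\tau_s\gamma^{(s)}$ with $\gamma^{(s)}=\sum_m c_{m,s}\gamma_m\in\theta(f)$, and rerun the extraction: every term carrying a factor in $(g+Jg)$ has vanishing $\tau_s$-coefficient, so comparing coefficients of each fixed $\tau_s$ yields $\gamma^{(s)}\in\tae f$, and the basis property of $\gamma_1,\ldots,\gamma_k$ in $\nae f$ forces all $c_{m,s}=0$. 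This is precisely the argument the paper's phrase ``a similar proceeding'' is implicitly invoking, and your justification for why the extraction survives mixed sums --- the proof of Theorem \ref{thm_basis_main} isolates one $\tau_s$-coefficient at a time, using only that the $\tau_j$ are independent modulo $(g+Jg)$, never that the left-hand side involves a single monomial --- is the right one. Your closing remark, that under quasihomogeneity of $g$ (up to $\mathscr R$-equivalence) or substantiality of $F$ the set has $kr=\Aecod(f)\,\tau(g)$ elements and is therefore a basis, is exactly how the paper proceeds in Corollary \ref{coro_nice_versal}.
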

\begin{proof}
A vector field is equal to zero over $\nae \left(A_{F,g}(f)\right)$ if and only if it belongs to $\tae \left(A_{F,g}(f)\right)$.
By contradiction, assume for some $m\in\left\lbrace 1,\ldots,k\right\rbrace$ and $s\in\left\lbrace 1,\ldots,r\right\rbrace$:
\begin{align*}
\begin{bmatrix}
\tau_s(z)\gamma_m(x) \\
0
\end{bmatrix}\in\tae (A_{F,g}(f))
\end{align*}
Then, by Theorem \ref{thm_basis_main}, $\gamma_m \in \tae f$, which is absurd since it is a generator of a basis for $\nae f$.

To check that their classes are linearly independent, assume that there exist some $a_{m,s}\in\C$ not all zero, for $m=1,\ldots,k$ and $s=1,\ldots,r$, and some vector fields $\xi\in\theta_{n+d},\eta\in\theta_{p+d}$ such that:
\begin{align*}
\begin{bmatrix}
\sum_{m,s=1}^{k,r}a_{m,s}\tau_s\gamma_m \\
0
\end{bmatrix}\ = dA(\xi) + \eta\circ A
\end{align*}
with $A(x,z) = A_{F,g}(f)(x,z) = (\bar f(x,g(z)),g(z))$.
Take any $s'$ such that $a_{m,s'}$ is non-zero for some $m$.
Then, following the same notation as in the proof of Theorem \ref{thm_basis_main}, we can write:
\begin{multline*}
\sum_{m=1}^{k}a_{m,s'}\tau_{s'}(z)\gamma_m(x) = \sum_{m=1}^k\sum_{s\neq s'}^r a_{m,s}\tau_s(z)\gamma_m(x) + d_x\bar f|_{(x,g(z))}(\xi_{1,\ldots,n})\\  + d_\lambda\bar f|_{(x,g(z))}\circ dg|_z(\xi_{n+1,\ldots,n+d}) + \eta_{1,\ldots,p}\circ A
\end{multline*}
We can expand the components of $\xi$ and $\eta$ as in the proof of Theorem \ref{thm_basis_main} to look for the coefficients of $\tau_{s'}$ in the right-hand side of the equation. 
Since this coefficient is zero for the first summand, we can proceed as in the the last theorem to find some $\tilde{\xi}\in\theta_n$ and $\tilde\eta\in\theta_p$ such that:
$$\sum_{m=1}^k a_{m,s'}\gamma_m(x) = df(\tilde\xi)(x) + \tilde\eta\circ f(x)$$
which is absurd since $\gamma_m$ are taken to form a linearly independent  set as classes over $\tae f$.

\end{proof}

Finally, we obtain:

\begin{coro}\label{coro_nice_versal}
Let $f\colon(\C^n,0)\to(\C^p,0)$ be a smooth map-germ with $\Aecod(f) = k$ that admits an OPSU $F(x,t) = (f_t(x),t)$, and suppose $\gamma_1,\ldots,\gamma_k\in\theta(f)$ forms a $\C$-basis of $\nae f$ such that $\gamma_1 = \frac{\partial f_t}{\partial t}$ evaluated in $t=0$. 
Let $g\in\O_d$ and consider $\tau_1,\ldots,\tau_r$ a monomial $\C$-basis for $\frac{\O_d}{ \langle g\rangle + Jg}$.

If $g$ is $\mathscr R$-equivalent to a quasihomogeneous function, or if $F$ is substantial, $A_{F,g}(f)$ admits the following versal unfolding:
\begin{equation}\label{eq_nice_versal}
VA_{F,g}(f)(x,z,\lambda) = \left( f_{g(z) +\sum_{s=1}^r\lambda_{s,1}\tau_s(z)}(x)  + \sum_{s=1}^r\sum_{m=2}^k \lambda_{s,m}\tau_s(z)\gamma_m(x) ,z,\lambda \right)
\end{equation}
\end{coro}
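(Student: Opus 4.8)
The plan is to show that the unfolding $VA_{F,g}(f)$ displayed in \eqref{eq_nice_versal} satisfies the infinitesimal versality criterion \eqref{eq_versal_char}, namely that the $\A_e$-tangent space to the augmentation together with the span of the partial derivatives of the unfolding with respect to the $\lambda_{s,m}$ fills all of $\theta(A_{F,g}(f))$. First I would differentiate $VA_{F,g}(f)$ with respect to each unfolding parameter and evaluate at $\lambda=0$. Differentiating with respect to $\lambda_{s,1}$ (the parameters deforming the argument $g(z)$ of $f_\bullet$) produces, by the chain rule, a vector field of the form $\begin{bmatrix}\tau_s(z)\,\gamma_1(x)\\0\end{bmatrix}$, where $\gamma_1 = \fpdat{f_t}{t}{t=0}$ is exactly the first-order deformation carried by the OPSU $F$. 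Differentiating with respect to $\lambda_{s,m}$ for $m=2,\ldots,k$ yields precisely $\begin{bmatrix}\tau_s(z)\,\gamma_m(x)\\0\end{bmatrix}$. Thus the span of all the $\lambda$-partials is exactly the set of $kr$ vector fields appearing in Corollary \ref{coro_augm_generadores}, provided I choose $\gamma_1$ together with $\gamma_2,\ldots,\gamma_k$ to be a $\C$-basis of $\nae f$.

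The crux is therefore a dimension count. By Corollary \ref{coro_augm_generadores}, the $kr$ vector fields $\begin{bmatrix}\tau_s(z)\gamma_m(x)\\0\end{bmatrix}$ form an independent set in $\nae(A_{F,g}(f))$. Here $k=\Aecod(f)$ and $r=\dim_\C \O_d/(g+Jg)=\tau(g)$. On the other hand, since $g$ is $\mathscr R$-equivalent to a quasihomogeneous function or $F$ is substantial, Corollary \ref{coro_igualdad_houston} gives $\Aecod(A_{F,g}(f)) = \Aecod(f)\,\tau(g) = kr$. Hence these $kr$ independent classes are not merely independent but span $\nae(A_{F,g}(f))$, i.e.\ they constitute a basis of the full $\A_e$-normal space. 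Because the images of the $\lambda$-partials of $VA_{F,g}(f)$ are exactly these classes, condition \eqref{eq_versal_char} is satisfied, and the unfolding is versal.

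I expect the main obstacle to be the bookkeeping in the first step: one must verify that $\fpdat{}{\lambda_{s,1}}VA_{F,g}(f)\big|_{\lambda=0}$ really reduces to $\tau_s(z)\gamma_1(x)$ in the first $p$ components with zeroes below, rather than picking up extraneous terms. This requires writing $F$ in the normal form $F(x,t)=(f(x)+t\tilde f(x),t)$ used in the proof of Theorem \ref{thm_basis_main} (the general OPSU being analogous), so that $f_{g(z)+\sum_s\lambda_{s,1}\tau_s(z)}(x)=f(x)+\big(g(z)+\sum_s\lambda_{s,1}\tau_s(z)\big)\tilde f(x)$ and the $\lambda_{s,1}$-derivative at $\lambda=0$ is simply $\tau_s(z)\tilde f(x)$, with $\gamma_1=\tilde f$. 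One then only needs the observation that adjoining $\gamma_1$ to the given basis $\gamma_2,\ldots,\gamma_k$ of $\nae f$ yields a full basis, which holds because $\gamma_1\notin\tae f$ (as $F$ is a nontrivial unfolding) and $k=\Aecod(f)$; after this identification the argument is a direct invocation of the two corollaries.
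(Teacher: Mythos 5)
Your proposal is correct and follows essentially the same route as the paper: take $\gamma_1=\fpdat{f_t}{t}{t=0}$, complete it to a basis of $\nae f$, identify the $\lambda$-partials of $VA_{F,g}(f)$ with the vector fields of Corollary \ref{coro_augm_generadores}, and conclude by the dimension count $\Aecod(A_{F,g}(f))=k\tau(g)=kr$ from Corollary \ref{coro_igualdad_houston}. The paper's proof is exactly this argument, including the reduction to the linear normal form of the OPSU handled in Theorem \ref{thm_basis_main}.
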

\begin{proof}
Denote by $A^\lambda_{F,g}(f)$ the first $p+d$ components of $VA_{F,g}(f)$ for each $\lambda\in\C^{kr}$.
To check that $VA_{F,g}(f)$ is in fact a versal unfolding of $A_{F,g}(f)$ we only need to see that:
$$\nae\left(A_{F,g}(f)\right) = \operatorname{Sp}_\C\left\lbrace \fpdat{A^\lambda_{F,g}(f)}{\lambda_{s,m}}{\lambda = 0}\right\rbrace_{\substack{m=1,\ldots,k \\ s = 1,\ldots,r}}$$
Corollary \ref{coro_augm_generadores} now ensures that this set is in fact an independent set of non-zero elements of $\nae\left(A_{F,g}(f)\right)$.
And by Corollary \ref{coro_igualdad_houston}, $\Aecod \left(A_{F,g}(f)\right) = \Aecod(f)\tau(g) = kr$, which is exactly the number of vector fields in the set.
Therefore, it forms a basis, and so $VA_{F,g}(f)$ is a versal unfolding.
\end{proof}

We finish this section by giving some examples on how to calculate a versal unfolding for an augmentation.

\begin{ex}\label{ex_versals}
Consider the smooth map-germ $f\colon(\C,0)\to(\C^2,0)$ given by $f(y) = (y^2,y^{2p+1})$, for any integer $p \geq 1$.
It admits an OPSU of the form $F(x,t) = (y^2,y^{2p+1}+ty,t)$.
Consider a smooth function $g\colon (\C,0)\to(\C,0)$ of the form $g(z) = z^q$ for some integer $q\geq 2$.

We have that $\nae f = \operatorname{Sp}_\C\left\lbrace (0,y), (0,y^3),\ldots,(0,y^{2p-1})\right\rbrace$, and that $\frac{\O_1}{\langle g\rangle + Jg}$ admits a monomial $\C$-basis formed by $1, z, z^2,\ldots, z^{q-2}$.
Then, by Corollary \ref{coro_nice_versal}, the augmentation:
$$A_{F,g}(f)(y,z) = (y^2,y^{2p+1} + z^qy,z)$$
admits the following versal unfolding:
$$VA_{F,g}(f)(y,z,\lambda) = \left(y^2, y^{2p+1} + z^qy + \sum_{s=1}^{q-1}\sum_{m=1}^{p} \lambda_{s,m}z^{s-1}y^{2m-1}, z, \lambda\right)$$
Compare these augmentations with Mond's list of simple maps from $\C^2$ to $\C^3$, \cite{mond}.
When $p = 1$, then $A_{F,g}(f)\equiv S_q$.
If $p\geq 1$ and $q = 2$, we have $A_{F,g}(f)\equiv B_p$.
And finally, if $p=2, q=3$, $A_{F,g}(f) \equiv F_4$.
In all the other cases, the augmentation is non-simple.

\end{ex}

\section{Simplicity of Augmentations}\label{simplicity}

We prove in this section our main results about simplicity of an augmentation.
Recall we are considering $f\colon(\C^n,0)\to(\C^p,0)$ a holomorphic map-germ with $(n,p)$ in the nice dimensions, that admits an OPSU $F(x,t)=(f_t(x),t)$, and $g\colon(\C^d,0)\to(\C,0)$ a smooth function.

Our first result is a condition for the lack of simplicity depending only on the augmenting function:

\begin{teo}\label{thm_nonsimple}
$$g \text{ not } \mathscr R\text{-simple} \implies A_{F,g}(f)  \text{ not } \A\text{-simple} $$
\end{teo}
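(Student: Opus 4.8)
The plan is to argue at the level of deformations: I will exhibit a deformation of $A_{F,g}(f)$ that meets infinitely many $\A$-classes. Since every unfolding is induced from a versal one, a simple germ can only meet finitely many classes in any of its deformations, so producing such a deformation shows $A_{F,g}(f)$ is not $\A$-simple. Because $g$ is not $\mathscr R$-simple, I fix a deformation $g_u$ of $g$, with $u$ in a neighbourhood of $0$, that realises infinitely many $\mathscr R$-classes arbitrarily close to $g$; applying the augmentation fibrewise produces the unfolding $\mathcal A(x,z,u) = (f_{g_u(z)}(x),z,u)$ of $A_{F,g}(f)$, whose slices are the augmentations $A_{F,g_u}(f)$.

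The first step is to convert $\A$-equivalence of these slices back into equivalence of the augmenting functions. By Proposition 3.4 of \cite{ORW2}, the very ingredient used in the proof of Theorem \ref{thm_r_iff_a}, $A_{F,g_u}(f)\sim_\A A_{F,g_{u'}}(f)$ forces $g_u\sim_{\mathscr K} g_{u'}$. Hence the number of $\A$-classes among the $A_{F,g_u}(f)$ is at least the number of $\mathscr K$-classes among the $g_u$, and it suffices to produce a subfamily of the $g_u$, with isolated singularities, realising infinitely many $\mathscr K$-classes.

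The main obstacle is precisely this last point: passing from $\mathscr R$-non-simplicity to $\mathscr K$-non-simplicity of $g$. This is delicate because $\mathscr K$-orbits are in general strictly larger than $\mathscr R$-orbits; when $\mu(g)>\tau(g)$ a single $\mathscr K$-class already contains a positive-dimensional family of $\mathscr R$-inequivalent germs, so $\mathscr R$-moduli need not descend to $\mathscr K$-moduli. The way around it is that this discrepancy cannot occur for $\mathscr K$-simple germs: over $\C$ the $\mathscr K$-simple function germs with isolated singularity are exactly the quasihomogeneous simple ($ADE$) germs, for which $\mu=\tau$ and, by Proposition \ref{prop_qh_kr}, $\mathscr K$- and $\mathscr R$-equivalence coincide. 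Thus a $\mathscr K$-simple germ is $\mathscr R$-simple, and the contrapositive shows that our non-$\mathscr R$-simple $g$ is not $\mathscr K$-simple; so some deformation of $g$ indeed realises infinitely many $\mathscr K$-classes among germs with isolated singularity.

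Finally I would assemble the pieces: choosing $g_u$ along such a $\mathscr K$-modal direction, the slices $A_{F,g_u}(f)$ fall into infinitely many $\A$-classes, and since these germs all carry their singular point at the origin their $\A$-classes agree with their left--right classes. As $\mathcal A$ is induced from a versal unfolding of $A_{F,g}(f)$, these infinitely many classes all occur among the germs arising in that versal unfolding, contradicting $\A$-simplicity. I would close by remarking that the argument is insensitive to the chosen OPSU $F$ and uses $g$ only through its $\mathscr R$-class, in accordance with Proposition \ref{prop_requiv_augm_aequiv}.
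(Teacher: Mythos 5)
Your overall architecture runs parallel to the paper's: both arguments push a modal family of augmenting functions fibrewise through the augmentation and use Proposition 3.4 of \cite{ORW2} to convert $\A$-equivalence of augmentations back into $\mathscr K$-equivalence of the augmenting functions. The difference lies in where the moduli come from, and that is exactly where your proof has a genuine gap. You need the implication ``$g$ not $\mathscr R$-simple $\implies$ $g$ not $\mathscr K$-simple'', and you justify it by asserting that the $\mathscr K$-simple germs are exactly the $ADE$ germs and then invoking Proposition \ref{prop_qh_kr}. But Proposition \ref{prop_qh_kr} compares two germs \emph{both} of which are quasihomogeneous: if $g$ is merely known to be $\mathscr K$-equivalent to an $ADE$ normal form $h$, nothing in the paper allows you to conclude $g\sim_{\mathscr R}h$, because $g$ itself is not known to be (even $\mathscr R$-equivalent to) a quasihomogeneous germ. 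So, as written, the step you yourself flag as ``the main obstacle'' is not established; the reasoning is either incomplete (if your cited classification is up to $\mathscr K$-equivalence) or silently assumes the conclusion (if it is up to $\mathscr R$-equivalence, in which case Proposition \ref{prop_qh_kr} plays no role). The gap is fillable: either cite the classical fact that contact-simple and right-simple function germs with isolated singularity coincide (both lists being $ADE$), or argue via invariants --- $\mu$ and $\tau$ are $\mathscr K$-invariants, $ADE$ germs satisfy $\mu=\tau$, and Saito's theorem then makes $g$ itself $\mathscr R$-equivalent to a quasihomogeneous germ, after which Proposition \ref{prop_qh_kr} does apply. But some such extra input is indispensable.

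It is worth contrasting this with how the paper sidesteps the issue entirely: by Arnol'd's adjacency result in \cite{arnoldnf}, every non-$\mathscr R$-simple function deforms into one of the explicit families $P_8$, $X_9$, $J_{10}$, which are quasihomogeneous by inspection and pairwise $\mathscr R$-inequivalent along the parameter. Theorem \ref{thm_r_iff_a} then applies directly to the members of these families, producing a one-parameter family of pairwise $\A$-inequivalent augmentations inside the deformations of $A_{F,g}(f)$; no comparison between $\mathscr R$- and $\mathscr K$-simplicity is ever needed. Once patched with the citation above, your route is also valid and is arguably more intrinsic, since it never uses explicit normal forms; the price is that the patch is itself a nontrivial classical theorem, whereas the paper's proof is self-contained given Theorem \ref{thm_r_iff_a} and Arnol'd's list. (Both proofs, yours and the paper's, share the same minor bookkeeping point of translating critical points and critical values of the deformed augmenting function to the origin before reading the slices of the deformation as augmentations; this is standard and not the issue.)
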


\begin{proof}
Arnold proved in \cite{arnoldnf} that every non-simple function is adjacent to one of the following non-simple families:
\begin{equation*}
\begin{aligned}
P_8 & \equiv x^3+y^3+z^3+\lambda xyz   && \lambda^3 + 27 \neq 0 \\
X_9 & \equiv x^4+y^4 + \lambda x^2y^2  && \lambda^2 \neq 4 \\
J_{10} & \equiv x^3+y^6+\lambda x^2y^2 && 4\lambda^3 + 27\neq 0
\end{aligned}
\end{equation*}
Therefore it is sufficient to see that augmenting via any of these functions results in a non-simple augmentation.
Denote any of the three families by $g_\lambda$, then $A_{F,g_\lambda}(f)$ is a 1-parameter family of deformations of $A_{F,g_0}(f)$.

Note that each $g_\lambda$ is quasihomogeneous, and $\mu(g_\lambda) = \mu(g_0)< \infty$.
Since $g_\lambda$ is not $\mathscr R$-equivalent to $g_{\lambda'}$ for every $\lambda \neq \lambda'$, we can apply Theorem \ref{thm_r_iff_a} to see that $A_{F,g_\lambda}(f)$ is not $\mathcal{A}$-equivalent to $A_{F,g_{\lambda'}}(f)$ for every $\lambda\neq\lambda'$.
So $A_{F,g_0}(f)$ cannot be simple.

\end{proof}

The rest of this section will be focused on characterising simplicity in the case that $\Aecod(f) = 1$ or that $g$ is a Morse function.
What makes these cases special is the type of germs that appear in the bifurcation set of the augmentation: every deformation of the augmentation is an augmentation of a deformation of $f$ by a deformation of $g$.

\begin{prop}\label{prop_todo_aug}
Let $f\colon(\C^n,0)\to(\C^p,0)$ be a smooth map-germ with $\Aecod(f) = k$ that admits an OPSU $F$ and consider $g\in \O_d$ a smooth simple function with $\tau(g)=r$.

If $\Aecod(f) = 1$ or $g$ is a Morse function, there exist versal unfoldings $(f_\lambda,\lambda)$ and $(g_\mu,\mu)$ of $f$ and $g$ respectively such that any deformation in the versal unfolding of the augmentation satisfies $\left(A_{F, g}(f)\right)_\gamma = A_{F,g_\mu}(f_\lambda)$ for some $\lambda\in\C^k$ and $\mu\in\C^r$.
\end{prop}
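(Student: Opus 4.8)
The plan is to read the two cases directly off the explicit versal unfolding produced in Corollary \ref{coro_nice_versal} and, in each case, to exhibit concrete versal unfoldings $f_\lambda$ and $g_\mu$ whose augmentation reproduces the given slice. Recall that unfolding has the form
$$VA_{F,g}(f)(x,z,\lambda) = \left( f_{g(z) + \sum_{s=1}^r\lambda_{s,1}\tau_s(z)}(x) + \sum_{s=1}^r\sum_{m=2}^k\lambda_{s,m}\tau_s(z)\gamma_m(x), z, \lambda\right),$$
where $\gamma_1 = \fpdat{f_t}{t}{t=0}$ and $\gamma_2,\dots,\gamma_k$ complete a $\C$-basis of $\nae f$, and $\tau_1,\dots,\tau_r$ is a monomial basis of $\frac{\O_d}{g+Jg}$. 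The structural observation driving the argument is that under each hypothesis the double sum degenerates: if $\Aecod(f)=1$ only the index $m=1$ survives, while if $g$ is Morse then $r=1$ and only the index $s=1$ survives. This is precisely why the $k+r$ parameters coming from $f_\lambda$ and $g_\mu$ suffice to sweep out the $kr$-dimensional base, something that would fail for general $k,r$.

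First I would set up the auxiliary unfoldings. Since $g$ is simple it is $\mathscr{R}$-equivalent to a quasihomogeneous function, so $g\in Jg$ and the monomials $\tau_1,\dots,\tau_r$ also form a basis of $\frac{\O_d}{Jg}$; hence $g_\mu(z) = g(z) + \sum_{s=1}^r\mu_s\tau_s(z)$ is an $\mathscr{R}$-versal unfolding of $g$. For $f$ I would take $f_\lambda(x) = f_{\lambda_1}(x) + \sum_{m=2}^k\lambda_m\gamma_m(x)$, whose first-order directions $\gamma_1,\dots,\gamma_k$ span $\nae f$, so it is $\A$-versal, and equip it with the one-parameter unfolding $F_\lambda(x,t) = (f_{\lambda_1 + t}(x) + \sum_{m=2}^k\lambda_m\gamma_m(x),t)$, which is the deformation of $F$ in the same directions. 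By openness of stability $F_\lambda$ is stable for small $\lambda$, hence an OPSU of $f_\lambda$; in the notation $A_{F,g_\mu}(f_\lambda)$ the symbol $F$ refers to this $F_\lambda$.

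Next I would carry out the two matchings by direct substitution. In the Morse case ($r=1$, $\tau_1\equiv 1$) one computes
$$A_{F,g_\mu}(f_\lambda)(x,z) = \left(f_{\lambda_1 + g(z) + \mu_1}(x) + \sum_{m=2}^k\lambda_m\gamma_m(x), z\right),$$
so choosing $\mu_1 = 0$, $\lambda_1 = \lambda_{1,1}$ and $\lambda_m = \lambda_{1,m}$ reproduces the slice of $VA_{F,g}(f)$, the deformation living entirely in $f_\lambda$. In the codimension one case ($k=1$, no $\gamma_m$ terms) the same computation gives
$$A_{F,g_\mu}(f_\lambda)(x,z) = \left(f_{\lambda_1 + g(z) + \sum_{s=1}^r\mu_s\tau_s(z)}(x),z\right),$$
and taking $\lambda_1 = 0$ with $\mu_s = \lambda_{s,1}$ reproduces the slice, the deformation living entirely in $g_\mu$. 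In both cases every $\gamma$ in the base of $VA_{F,g}(f)$ is matched, which is the assertion.

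The step I expect to require the most care is the construction and justification of $F_\lambda$: one must verify that the deformation of $F$ by the directions $\gamma_2,\dots,\gamma_k$ (and the shift in $\lambda_1$) remains stable, so that $A_{F,g_\mu}(f_\lambda)$ is a genuine augmentation rather than a formal expression, and that the chosen $f_\lambda$ is truly $\A$-versal. The remaining content is bookkeeping, rendered transparent by the degeneration of the double sum noted above.
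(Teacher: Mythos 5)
Your proof is correct and is essentially the paper's own argument: the same unfoldings $g_\mu=g+\sum_{s}\mu_s\tau_s$ and $f_\lambda=f_{\lambda_1}+\sum_{m\ge 2}\lambda_m\gamma_m$, the same appeal to Corollary \ref{coro_nice_versal}, and the same parameter matching exploiting that the double sum collapses when $k=1$ or $r=1$. The one divergence is minor but worth noting: in the $\Aecod(f)=1$ case you set $\lambda_1=0$ and keep the constant monomial ($\tau_r=1$) inside $g_\mu$, so for $\mu_r\neq 0$ your augmenting function has $g_\mu(0)\neq 0$ and $A_{F,g_\mu}(f)$ is not literally an augmentation under the paper's definition; the paper instead absorbs that constant into the OPSU parameter, writing the slice as $A_{F_{\mu_r},\,g_\mu-\mu_r}(f_{\mu_r})$, which keeps the augmenting function origin-preserving and, more importantly, exhibits the slices with $\mu_r\neq 0$ as unfoldings of the stable germ $f_{\mu_r}$ --- exactly the form exploited later in the proof of Theorem \ref{thm_cod1_simple}.
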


\begin{proof}
Write $F(x,t) = (\tilde{f}_t(x),t)$. 
Choosing appropiate $\gamma_2,\ldots,\gamma_k\in\theta(f)$ and $\tau_1,\ldots,\tau_r\in\O_d$, and writing:
\begin{align*}
f_\lambda(x) & = \tilde{f}_{\lambda_1}(x)+\sum_{m=2}^k\lambda_m\gamma_m(x)\\
g_\mu(z) & = g(z) + \sum_{s=1}^r\mu_s\tau_s(z)
\end{align*}
we can obtain versal unfoldings $(f_\lambda(x),\lambda)$ and $(g_\mu(z),\mu)$ of $f$ and $g$ respectively.
Moreover, we can take $\tau_r(z) = 1$.
Then, since $g$ is simple and therefore $\mathscr R$-equivalent to a quasihomogeneous function, we can apply Corollary \ref{coro_nice_versal} to obtain versal unfoldings in each case.

First assume $\Aecod(f) = 1$, denote for each $\lambda\in\C$ the unfolding $F_\lambda(x,t) = (\tilde{f}_{t + \lambda}(x),t)$, which is an OPSU for $f_\lambda(x)$.
Then the versal unfolding of $A_{F,g}(f)$ takes the form:
$$VA_{F,g}(f)(x,z,\mu) = \left(\tilde{f}_{g(z) +\sum_{s=1}^{r-1}\mu_s\tau_s(z) + \mu_r}(x),z,\mu\right) = \left(A_{F_{\mu_r}, g_\mu(z) - \mu_r}(\tilde f_{\mu_r}),\mu\right)$$
That is: every deformation of $A_{F,g}(f)$ is $\A$-equivalent, for some $\mu\in\C^r$, to the augmentation of $\tilde{f}_{\mu_r}$ by $F_{\mu_r}$ via the augmenting function $g_\mu(z) - \mu_r$.

Finally, if $g$ is a Morse function, that is $r = 1$, then denote for each $\lambda\in\C^k$ the unfolding:
$$F_\lambda(x,t) = \left(\tilde{f}_{t + \lambda_1}(x) + \sum_{m=2}^{k}\lambda_m\gamma_m(x),t\right)$$
which is an OPSU for $f_\lambda$.
Then, the versal unfolding of $A_{F,g}(f)$ takes the form:
$$VA_{F,g}(f)(x,z,\lambda) = \left(\tilde{f}_{g(z) + \lambda_1}(x) + \sum_{m=2}^{k}\lambda_m\gamma_m(x),z,\lambda\right) = \left(A_{F_\lambda,g}(f_\lambda),\lambda\right)$$
That is, every deformation of $A_{F,g}(f)$ is $\A$-equivalent to the augmentation of some $f_\lambda$ via a Morse function, for some $\lambda\in\C^k$.

\end{proof}

Notice that a converse result also holds: every augmentation of the germ of a deformation of $f$, appears as a deformation of the augmentation $A_{F,g}(f)$ (see Remark \ref{rem_marartari}).

\begin{teo}\label{thm_cod1_simple}
Assume $\Aecod(f) = 1$. Then:
\begin{equation*}
A_{F,g}(f)   \text{ is } \A\text{-simple}  \iff g  \text{ is } \mathscr R\text{-simple}
\end{equation*}
\end{teo}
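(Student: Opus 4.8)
The theorem is a biconditional, so I would prove each implication separately. The forward direction ($A_{F,g}(f)$ simple $\implies g$ simple) is the contrapositive of Theorem \ref{thm_nonsimple}, which has already been established for \emph{any} augmented map $f$ with no codimension hypothesis: if $g$ is not $\mathscr R$-simple then $A_{F,g}(f)$ is not $\A$-simple. So this direction is immediate and requires no new work. The genuine content is the converse: assuming $\Aecod(f)=1$ and $g$ is $\mathscr R$-simple, I must show $A_{F,g}(f)$ is $\A$-simple.

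\emph{Strategy for the hard direction.} The key structural tool is Proposition \ref{prop_todo_aug}. Since $\Aecod(f)=1$, that proposition supplies versal unfoldings for which every deformation appearing in the versal unfolding of $A_{F,g}(f)$ is itself $\A$-equivalent to an augmentation $A_{F_{\mu_r},\,g_\mu(z)-\mu_r}(f_{\mu_r})$ for some parameter $\mu\in\C^r$. Crucially, since $\Aecod(f)=1$ and $f$ is already the zero-codimension-beyond-stable germ, the augmented map $f_{\mu_r}$ is $\A$-equivalent to either $f$ itself (a fixed codimension-1 germ) or to a stable germ; there is no room for a moduli family among the augmented maps. Therefore the only source of potential non-simplicity in the bifurcation set is the augmenting function, which ranges over the deformations $g_\mu(z)-\mu_r$ of the $\mathscr R$-simple germ $g$. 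The plan is then: (1) invoke Proposition \ref{prop_todo_aug} to reduce every germ appearing in the versal unfolding to an augmentation of a fixed germ ($\A_e$-codimension $1$ or stable) via a deformation of $g$; (2) use $\mathscr R$-simplicity of $g$ to conclude that these deforming functions fall into finitely many $\mathscr R$-classes; and (3) apply Theorem \ref{thm_r_iff_a} to transfer this finiteness to finitely many $\A$-classes of augmentations, noting that the functions involved are $\mathscr R$-equivalent to quasihomogeneous ones (simple germs are, by Arnold's classification). This yields finitely many $\A$-classes among all germs in the bifurcation set, which is precisely $\A$-simplicity of $A_{F,g}(f)$.

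\emph{Where the obstacles lie.} There are two points needing care. First, Proposition \ref{prop_todo_aug} describes deformations of the augmentation that lie in \emph{its own} versal unfolding, but $\A$-simplicity (per the definition in the excerpt) is a statement about the germs $f_u$ arising at arbitrary points of a small representative of the versal unfolding, not merely at the parameter space origin. I must check that the local germs at nearby points $(y,u)$ are themselves augmentations of this controlled form; this should follow because the augmentation structure is preserved under the localisation, but it requires verifying that the critical-point set and the fibre structure behave as expected so that each local germ is again an augmentation of a lower-dimensional germ by a deformed function. Second, to apply Theorem \ref{thm_r_iff_a} I need the deforming functions $g_\mu(z)-\mu_r$ to be $\mathscr R$-equivalent to quasihomogeneous functions; this holds because a simple function germ is $\mathscr R$-equivalent to one of Arnold's normal forms $A_k,D_k,E_6,E_7,E_8$, all of which are quasihomogeneous, and the same applies to the finitely many germs adjacent to $g$ in its deformations. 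The main conceptual obstacle is thus bookkeeping the correspondence between the stratification of the function side and the stratification of the augmentation side — ensuring that ``finitely many $\mathscr R$-classes of deformations of $g$'' maps cleanly onto ``finitely many $\A$-classes of germs in the bifurcation set of $A_{F,g}(f)$'' without introducing a continuum of classes through the augmented-map factor, which is exactly where the hypothesis $\Aecod(f)=1$ is indispensable.
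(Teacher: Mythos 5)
Your overall strategy coincides with the paper's: the forward implication is exactly the contrapositive of Theorem \ref{thm_nonsimple}, and the converse is run through Proposition \ref{prop_todo_aug}, splitting the germs in the versal unfolding according to whether the base germ is $f$ itself or has been pushed off into the stable range, then transferring finiteness of $\mathscr R$-classes of deformations of $g$ to finiteness of $\A$-classes of augmentations. (One cosmetic remark: for that transfer you only need the implication $g_1\sim_{\mathscr R}g_2\Rightarrow A_{F,g_1}(f)\sim_\A A_{F,g_2}(f)$, which is Proposition \ref{prop_requiv_augm_aequiv} and requires no quasihomogeneity; invoking the full biconditional of Theorem \ref{thm_r_iff_a} is heavier than necessary, though your appeal to Arnold's quasihomogeneous normal forms does make it legitimate.)

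The genuine gap is in the stratum where the constant parameter is nonzero, i.e.\ the germs $A_{F_{\mu_r},\,g_\mu-\mu_r}(f_{\mu_r})$ with $\mu_r\neq 0$. There both the base germ $f_{\mu_r}$ and the OPSU $F_{\mu_r}$ vary continuously with $\mu_r$, while Theorem \ref{thm_r_iff_a} and Proposition \ref{prop_requiv_augm_aequiv} are statements about a \emph{fixed} base and a \emph{fixed} OPSU; moreover the OPSU-independence result, Proposition \ref{thm_cod1_opsu}, is proved only for bases of $\A_e$-codimension $1$, not for stable bases. So your observation that ``the base germs lie in finitely many classes'' does not, by the results you cite, bound the number of $\A$-classes of their augmentations: a priori the continuum of parameters $\mu_r\neq 0$ could still produce a continuum of augmentation classes, and this is precisely the bookkeeping obstacle you flag but do not resolve. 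The paper closes this case without any augmentation machinery at all: writing the OPSU as $F(x,t)=(f(x)+t\gamma(x),t)$, the hypothesis $\Aecod(f)=1$ makes $F$ versal, and since $(n,p)$ is in the nice dimensions the germ $f+\mu_r\gamma$ is \emph{stable} for $\mu_r\neq 0$; hence the germ in question, being an unfolding (with parameter $z$) of a stable germ, is itself stable, and stable germs in fixed dimensions in the nice dimensions fall into finitely many $\A$-classes. With that one observation substituted for your step (3) on this stratum, your plan matches the paper's proof.
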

\begin{proof}
Assume $A_{F,g}(f)$ is $\A$-simple: then, by Theorem \ref{thm_nonsimple}, $g$ is $\mathscr R$-simple.

For the opposite implication, take a function $g$ that is $\mathscr R$-simple.
Since $\Aecod(f)=1$, $f$ admits a versal unfolding of the form:
$$F(x,t) = (f(x)+t\gamma(x),t)$$
for some $\gamma\in\theta(f)$.
By Theorem \ref{thm_cod1_opsu}, we only need to check the result using $F$.

Using Proposition \ref{prop_todo_aug}, we have that the germ of any deformation in the versal unfolding has the form:
$$VA_{F,g}^\lambda(f)(x,z) =  \left(f(x) + \left(g(z) + \sum_{i=1}^r\lambda_i\tau_i(z)+  \lambda_{r+1} \right)\gamma(x),z\right)$$
for small enough $\lambda = (\lambda_1,\ldots,\lambda_{r+1})\in\C^r$.
Here, $\tau(g) = r+1$ and $\tau_1,\ldots,\tau_r\in \O_d$ are chosen so that $\text{Sp}_\C\left\lbrace 1,\tau_1,\ldots,\tau_r\right\rbrace = \frac{\O_d}{\langle g\rangle + Jg}$.
Denote $g_\lambda(z) = g(z) + \sum_{i=1}^r \lambda_i\tau_i(z)$.

If $\lambda_{r+1} = 0$, $VA_{F,g}^\lambda(f)$ is of the form $A_{F,g_\lambda}(f)$.
Here we only find a finite number of left-right equivalence classes, since $g$ is $\mathscr R$-simple and we can apply Theorem \ref{thm_r_iff_a}.

If $\lambda_{r+1}\neq 0$, $VA_{F,g}^\lambda(f)$ is the augmentation of $f(x) +\lambda_{r+1}\gamma(x)$ via $g_\lambda$.
Since $\Aecod(f) = 1$ and the pair $(n,p)$ is in the nice dimensions, $f$ is simple.
Therefore, since $F$ is a versal unfolding, $f(x) + \lambda_{r+1}\gamma(x)$ is stable.
Hence, $VA^\lambda_{F,g}(f)$ is an unfolding of a stable germ and so it is stable.

Taking a representative of the versal unfolding on small enough neighbourhoods of the origin, both cases for $\lambda_{r+1}$ will only add a finite number of left-right equivalence classes and so $A_{F,g}(f)$ is simple.

\end{proof}

\begin{rem}
The above proof could be completed exactly the same way with a generic OPSU instead of using Theorem \ref{thm_cod1_opsu}, but this way results in a cleaner proof.
\end{rem}

\begin{coro}\label{coro_modality}
If $\Aecod(f) = 1$ and all the deformations of $g$ along its $\mu$-constant stratum are quasihomogeneous, then the modality of $A_{F,g}(f)$ is equal to the modality of $g$.
\end{coro}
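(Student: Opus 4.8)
The plan is to rerun the bifurcation-diagram analysis that underlies Theorem \ref{thm_cod1_simple}, but now keeping track of the dimension of the moduli rather than merely the finiteness of the set of classes. First I would apply Proposition \ref{prop_todo_aug}: since $\Aecod(f)=1$, every germ occurring in the versal unfolding of $A_{F,g}(f)$ is $\A$-equivalent either to a stable germ (this is the locus where the parameter $\mu_r$ attached to the choice $\tau_r=1$ is nonzero) or, on the slice $\mu_r=0$, to an augmentation $A_{F,g_\mu}(f)$ with $g_\mu = g + \sum_{s=1}^{r-1}\mu_s\tau_s$. The stable stratum carries no moduli, so the modality of $A_{F,g}(f)$ equals the dimension of the $\A$-moduli of the family $\{A_{F,g_\mu}(f)\}_{\mu}$. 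I would also note that $g$ itself lies on its own $\mu$-constant stratum, so by hypothesis $g$ is quasihomogeneous; hence $g\in Jg$, $\tau(g)=\mu(g)=r$, and the slice $\mu_r=0$ realises exactly the reduced (constant-free) $\mathscr R$-versal deformation of $g$, i.e. the base in which the modality of $g$ is measured.

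The core of the argument is the identification of these two moduli spaces. Restricting to the $\mu$-constant stratum $S$ of $g$ inside $\{g_\mu\}$, every $g_\mu$ is quasihomogeneous by hypothesis, so Theorem \ref{thm_r_iff_a} applies and yields $A_{F,g_\mu}(f)\sim_\A A_{F,g_{\mu'}}(f)$ if and only if $g_\mu\sim_{\mathscr R}g_{\mu'}$ for $\mu,\mu'\in S$. Thus over $S$ the assignment $A_{F,g_\mu}(f)\mapsto g_\mu$ descends to a bijection between the $\A$-classes of the augmentations and the $\mathscr R$-classes of the deformed functions, and this bijection is dimension-preserving. Consequently the $\A$-moduli space of $\{A_{F,g_\mu}(f):\mu\in S\}$ is identified with the $\mathscr R$-moduli space of $\{g_\mu:\mu\in S\}$, and the dimension of the latter is, by Gabri\`{e}lov's theorem \cite{gabrielov}, precisely the modality of $g$. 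This already gives the lower bound, that the modality of $A_{F,g}(f)$ is at least the modality of $g$.

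The step I expect to be the main obstacle is the matching upper bound, namely that the modality of $A_{F,g}(f)$ is genuinely attained on the image of $S$ and not on the complementary strata. Here I would argue that any $g_\mu$ lying off $S$ has strictly smaller Milnor number, so $A_{F,g_\mu}(f)$ is the augmentation of an adjacent, less degenerate singularity, and invoke upper semicontinuity of modality under adjacency (itself a consequence of Gabri\`{e}lov's description, since the dimension of the $\mu$-constant stratum cannot increase along adjacencies) to bound the moduli of these strata by the modality of $g$; the stable strata contribute nothing. The delicate point is the bookkeeping on the non-quasihomogeneous locus, where Theorem \ref{thm_r_iff_a} is unavailable and only the coarser $\mathscr K$-level control of Proposition 3.4 of \cite{ORW2} survives: one must verify that passing from the functions $g_\mu$ to the augmentations $A_{F,g_\mu}(f)$ neither introduces extra moduli there nor collapses the top stratum over $S$. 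Granting this, the two inequalities combine to give that the modality of $A_{F,g}(f)$ equals the modality of $g$.
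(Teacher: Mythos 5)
Your skeleton is the paper's: decompose the versal unfolding of $A_{F,g}(f)$ via Proposition \ref{prop_todo_aug} into a stable locus (constant term nonzero) and the slice of augmentations $A_{F,g_\mu}(f)$, use Theorem \ref{thm_r_iff_a} — available along the $\mu$-constant stratum precisely because of the quasihomogeneity hypothesis — to match $\A$-classes with $\mathscr R$-classes there, and convert dimensions with Gabri\`{e}lov's theorem \cite{gabrielov}. Your lower bound is correct and is essentially the paper's argument. The gap is your upper bound, which you yourself leave conditional (``granting this''). The detour through adjacency and upper semicontinuity of modality does not close it: bounding the $\mathscr R$-modality of the functions $g_\mu$ lying off the $\mu$-constant stratum says nothing about the $\A$-moduli of their augmentations until you know that augmenting cannot create moduli, and that is exactly the statement you concede you cannot verify on the non-quasihomogeneous locus, where you only allow yourself the $\mathscr K$-level control of Proposition 3.4 of \cite{ORW2}. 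As written, the proof is therefore incomplete.

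The missing step is already in the paper and requires no quasihomogeneity whatsoever: Proposition \ref{prop_requiv_augm_aequiv} says that $g_\mu\sim_{\mathscr R}g_{\mu'}$ implies $A_{F,g_\mu}(f)\sim_{\A}A_{F,g_{\mu'}}(f)$, unconditionally. Hence the assignment $[g_\mu]_{\mathscr R}\mapsto[A_{F,g_\mu}(f)]_{\A}$ is well defined on the whole versal deformation of $g$, and by Proposition \ref{prop_todo_aug} every non-stable deformation of $A_{F,g}(f)$ lies in its image. So any covering of a neighbourhood of $g$ by finitely many families of $\mathscr R$-orbits with at most $m$ parameters, $m$ the modality of $g$, pushes forward to a covering of the non-stable part of the bifurcation diagram of $A_{F,g}(f)$ by finitely many families of $\A$-orbits with at most $m$ parameters, while the stable part contributes finitely many classes; this is the upper bound, with no adjacency bookkeeping and no appeal to \cite{ORW2}. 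This two-way correspondence — Proposition \ref{prop_requiv_augm_aequiv} in one direction, Theorem \ref{thm_r_iff_a} together with the quasihomogeneity hypothesis and Gabri\`{e}lov's theorem in the other — is exactly the content of the paper's very short proof, which phrases it as: by Theorem \ref{thm_cod1_simple} the non-simple points of the bifurcation set of $A_{F,g}(f)$ are identified with the $\mu$-constant stratum of $g$, and therefore the modalities coincide.
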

\begin{proof}
By Theorem \ref{thm_cod1_simple}, the points in the parameter space of $A_{F,g}(f)$ for which the deformation is not simple are the same that belong to the $\mu$-constant stratum of $g$.
Therefore, their modality is the same.
\end{proof}

This allows us to produce examples of finitely determined non-simple map-germs with the same modality as the augmenting function.
\begin{ex}
Consider $f(z) = z^3$ and the augmenting unimodal function $J_{10}$ of the form $g(x,y)=x^3+y^6 +ax^2y^2$, with $4a^3+27\neq0$.
Then the augmentation: $$A_{F,g}(f)=(x,y,z^3+(x^3+y^6+ax^2y^2)z)$$ has $\mathscr A_e$-codimension $10$ and is unimodal by Corollary \ref{coro_modality}
\end{ex}

The last theorem in this section is the characterization for the simplicity of augmentations via Morse augmenting functions:

\begin{teo}\label{thm_morse_simple}
If $g$ is a Morse function, then:
\begin{equation*}
A_{F,g}(f)  \text{ is } \A\text{-simple}  \iff  f  \text{ is } \A\text{-simple}
\end{equation*}
\end{teo}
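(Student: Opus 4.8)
The plan is to exploit the same structural reduction already used in the codimension-1 case, namely Proposition \ref{prop_todo_aug}, which tells us that when $g$ is Morse ($r=1$) every germ appearing in the versal unfolding of $A_{F,g}(f)$ is $\A$-equivalent to $A_{F_\lambda,g}(f_\lambda)$, i.e. an augmentation of a deformation $f_\lambda$ of $f$ by a Morse function. This is the analogue of the statement in Theorem \ref{thm_cod1_simple} that only augmentations of $g_\mu$ appear, but now the roles of augmented map and augmenting function are swapped: the freedom lives in deforming $f$ rather than $g$. So the whole argument is a mirror of the proof of Theorem \ref{thm_cod1_simple}.

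For the forward implication ($A_{F,g}(f)$ simple $\implies$ $f$ simple), I would argue by contrapositive. If $f$ is not $\A$-simple, then along a deformation $f_\lambda$ there is a continuum of pairwise non-$\A$-equivalent germs; I want to push this non-simplicity up through the augmentation. The natural tool is a converse to Proposition \ref{prop_requiv_augm_aequiv} in the $f$-variable: if $f_\lambda \not\sim_\A f_{\lambda'}$ then the Morse augmentations $A_{F,g}(f_\lambda) \not\sim_\A A_{F,g}(f_{\lambda'})$. Since a Morse function is quasihomogeneous, one expects the augmentation to faithfully record the $\A$-class of the augmented map; combined with Proposition \ref{prop_todo_aug}, a modulus for $f$ then produces a modulus inside the bifurcation diagram of $A_{F,g}(f)$, contradicting simplicity.

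For the reverse implication ($f$ simple $\implies$ $A_{F,g}(f)$ simple), I would follow the template of Theorem \ref{thm_cod1_simple} verbatim. By Proposition \ref{prop_todo_aug} the germs in the bifurcation set are exactly $A_{F_\lambda,g}(f_\lambda)$ for $\lambda\in\C^k$. For each such $\lambda$, $f_\lambda$ lies in one of finitely many $\A$-classes because $f$ is simple; and for each of those finitely many classes the Morse augmentation is, by Proposition \ref{prop_requiv_augm_aequiv} (applied in the augmented-map variable) and Corollary \ref{coro_nice_versal}, determined up to $\A$-equivalence. I must also handle the strata where $f_\lambda$ has become stable: there the Morse augmentation of a stable germ is itself stable, hence contributes only finitely many stable $\A$-classes, exactly as in the codimension-1 proof. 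Summing, only finitely many left-right classes appear, so $A_{F,g}(f)$ is simple.

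The main obstacle I anticipate is the forward direction, specifically establishing the faithfulness statement ``$f_\lambda\not\sim_\A f_{\lambda'}\implies A_{F,g}(f_\lambda)\not\sim_\A A_{F,g}(f_{\lambda'})$'' for a Morse $g$. Theorem \ref{thm_r_iff_a} gives the analogous rigidity in the augmenting function, but here the varying object is the augmented map, so that theorem does not directly apply and a separate argument is needed—presumably using that a Morse function augmentation is substantial or quasihomogeneous so that the $\A_e$-normal space of the augmentation (via Corollary \ref{coro_augm_generadores}) detects the $\A_e$-normal space of $f$, together with the fact that $\mu$-constant (here a continuum of $\A$-classes of $f$) forces a positive-dimensional stratum upstairs. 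Making this injectivity on $\A$-classes precise, rather than merely on infinitesimal data, is the delicate point.
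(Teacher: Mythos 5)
Your proposal diverges from the paper's proof in a way that leaves two genuine gaps, both of which the paper deliberately avoids by a different mechanism: a double induction on $\A_e$-codimension.

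First, your argument for ``$A_{F,g}(f)$ simple $\implies f$ simple'' rests on the faithfulness claim $f_\lambda \not\sim_\A f_{\lambda'} \implies A_{F_\lambda,g}(f_\lambda) \not\sim_\A A_{F_{\lambda'},g}(f_{\lambda'})$, which you correctly identify as the delicate point but do not prove. No such statement exists in the paper, and it cannot be extracted from its toolkit: Theorem \ref{thm_r_iff_a} (via Proposition 3.4 of \cite{ORW2}) detects only the $\mathscr{R}$-class of the \emph{augmenting function}, not the $\A$-class of the \emph{augmented map}, and recognizing the augmented map from the augmentation is a much harder problem. The paper's actual proof never needs this injectivity. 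It argues by induction on $\Aecod(A_{F,g}(f))$: the base case $\Aecod(A_{F,g}(f))=1$ forces $\Aecod(f)=1$ by Corollary \ref{coro_igualdad_houston} (since $\tau(g)=1$), and codimension-1 germs in the nice dimensions are simple; for the inductive step, every deformation of $f$ appears augmented via $g$ in the bifurcation diagram of $A_{F,g}(f)$ (Proposition \ref{prop_todo_aug}), simplicity of $A_{F,g}(f)$ makes those augmentations outside the origin simple of codimension at most $d$, the induction hypothesis then makes the corresponding $f_\lambda$ simple, and a germ all of whose proper deformations are simple must itself be simple.

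Second, your count of classes in the direction ``$f$ simple $\implies A_{F,g}(f)$ simple'' invokes Proposition \ref{prop_requiv_augm_aequiv} ``applied in the augmented-map variable'', but that proposition only concerns $\mathscr{R}$-equivalent augmenting functions with $f$ and $F$ fixed. The statement you actually need --- that a Morse augmentation depends only on the $\A$-class of the augmented map and not on the chosen OPSU --- is Proposition \ref{thm_cod1_opsu}, which is proved \emph{only} when $\Aecod = 1$. In the bifurcation diagram of $A_{F,g}(f)$ with $\Aecod(f)=k\geq 2$, there appear deformations $f_\lambda$ of every codimension up to $k$, each augmented with respect to its own OPSU $F_\lambda$, so ``finitely many $\A$-classes of $f_\lambda$'' does not deliver ``finitely many $\A$-classes of $A_{F_\lambda,g}(f_\lambda)$''. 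The paper sidesteps exactly this by the mirror induction on $\Aecod(f)$: germs adjacent to a simple $f$ of codimension $d+1$ are simple of codimension at most $d$, so by the induction hypothesis their augmentations (whatever OPSU is used) are simple, hence everything outside the origin in the bifurcation diagram of $A_{F,g}(f)$ is simple, which forces $A_{F,g}(f)$ to be simple. Your treatment of the stable strata is fine, but for the non-stable strata of codimension at least $2$ your finiteness step has no support; replacing both of your class-counting arguments by the paper's inductive scheme is what closes the gaps.
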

\begin{proof}
It is enough to prove the result for the case that $g(z) = z^2$, since any augmentation via a Morse function on serveral variables will be equivalent to the repeated agumentation of a Morse function in one variable.

Let $f$ be a simple map-germ. If $\Aecod(f) = 0$, then the augmentation is stable and therefore simple.
We proceed by induction: assume for some positive integer $d$ that, given any simple $f$ such that $\Aecod(f) \leq d$, $A_{F,g}(f)$ is simple.

Given any $f$ with $\Aecod(f)= d+1$, let $\bar F(x,\lambda) = (\bar f_\lambda(x),\lambda)$ be its versal unfolding. Then $\Aecod(\bar f_\lambda) \leq d$ since $f$ is simple.
Now, Proposition \ref{prop_todo_aug} ensures the germ of any deformation in the versal unfolding of $A_{F,g}(f)$ is the augmentation of some $\bar f_\lambda$ via $g$, which are all simple by the induction hypothesis.
This is only possible if $A_{F,g}(f)$ is simple.

Assume now that $f$ verifies $A_{F,g}(f)$ is simple.
If $\Aecod(A_{F,g}(f)) = 0$ we get by Corollary \ref{coro_igualdad_houston} that $\Aecod(f) = 0$.
Hence $f$ is simple.
Proceeding again by induction we assume that for some positive integer $d$, if $f$ verifies that $A_{F,g}(f)$ is simple and $\Aecod(A_{F,g}(f)) \leq d$, $f$ is simple.

Suppose $A_{F,g}(f)$ is simple and $\Aecod(A_{F,g}(f)) = d+1$. 
By Proposition \ref{prop_todo_aug} the germ of any deformation in its versal unfolding is the augmentation of some $\bar f_\lambda$ via $g$.
Conversely, the augmentation of any deformation of $f$ via $g$ is a deformation of $A_{F,g}(f)$.
In particular, since $A_{F,g}(f)$ is simple, each of those augmentations has codimension $d$ or lower.
Applying the induction hypothesis, for any small enough $\lambda\neq 0$ we have that $\bar f_\lambda$ is simple.
This implies that $f$ is simple.

\end{proof}

These results allow us to easily produce simple germs, such as those map-germs in already existing classifications that turn out to be augmentations in the conditions of the above theorems.

\begin{ex}\label{ex_simples}
As we saw on Example \ref{ex_versals}, all $S_k$ and $B_k$ from Mond's list of map-germs from $\C^2$ to $\C^3$ (\cite{mond}) are augmentations of $\A_e$-codimension 1 curves or obtained via a Morse augmenting function.
Therefore, by Theorems \ref{thm_cod1_simple} and \ref{thm_morse_simple} we have that they are simple.

Taken from Marar and Tari's classification of simple map-germs from $\C^3$ to $\C^3$ (\cite{marartari}), the following are augmentations:
\begin{equation*}
\begin{aligned}
3_Q & \equiv (x,y,z^3+Q(x,y)z) && Q \text{ simple }\\
4_1^k & \equiv (x,y,z^4+xz\pm y^kz^2) && k\geq 1\\
4_2^k & \equiv (x,y,z^4+(y^2\pm x^k)z + xz^2)  && k\geq 2
\end{aligned}
\end{equation*}
Then $3_Q$ is the augmentation of $z^3$, which has $\A_e$-codimension 1 via a simple function, and so is simple by Theorem \ref{thm_cod1_simple}.
$4_1^k$ is the augmentation of $(x,z^4+xz)$, of codimension $1$, via $y^k$, a simple function, and so is simple by Theorem \ref{thm_cod1_simple}.
Finally, $4_2^k$ is the augmentation of $(x,z^4 + x^kz + xz^2)$ via a Morse function, and so it is simple by Theorem \ref{thm_morse_simple}
\end{ex}

\begin{rem}\label{rem_marartari}
The last example shows an adjacency is missing in Marar and Tari's classification.
It can be checked that $(x,z^4 + x^2z + xz^2)$ is $\A$-equivalent to the map-germ $11_5$ in Rieger's classification (\cite{rieger2en2}).
The adjacency diagrams in this reference show that $11_5$ is adjacent to the lips-beaks singularity $LB_2 \equiv (x,y^3\pm x^2y)$.
Therefore, by Proposition \ref{prop_todo_aug}, the augmentation of $11_5$ via $g(z) = z^2$, (namely $4_2^2$ in Marar and Tari's list) must be adjacent to the augmentation of $LB_2$ via $g$ (namely, $3_{A_1}$ in Marar and Tari's list):
Thus, the arrow:
$$ 3_{A_1} \longleftarrow 4_2^2 $$ should be added to the last diagram shown in \cite{marartari}.

It is interesting to notice that there are no more adjacencies between $11_{2k+1}$ from Rieger's list and $LB_k$ for $k \geq 3$, and so no other adjacencies are missing in the diagram.

\end{rem}

\begin{ex}
When augmenting map-germs of $\A_e$-codimension greater than 1, via  non-Morse functions, simplicity of the augmentation does not seem to be directly related to the simplicity of the base map-germ or the augmenting function.
Consider $f\colon(\C,0)\to(\C^2,0)$ given by $f(y) = (y^2,y^5)$, which has $\Aecod(f) = 2$ and admits an OPSU $F(y,t) = (y^2,y^5 +ty,t)$.

Augmenting $f$ via $g_1(z) = z^3$ produces the following map-germ, present in Mond's list of simple map-germs, \cite{mond}:
$$F_4\equiv (y^2,y^5+z^3y,z)$$
But augmenting the same $f$ via another simple function, $g_2(z) = z^4$ returns a map-germ not equivalent to any of the germs in the list, and so it is not simple:
$$(y^2,y^5+z^4y,z)$$

\end{ex}

\section{Simple corank 1 augmentations from $\C^4\to\C^4$}\label{4en4}

In this section we apply our results to obtain a list of simple corank 1 augmentations in $\C^4\to\C^4$ from $\A_e$-codimension 1 germs and by Morse functions, shown in Table \ref{table_44}. 
Here, superindexes represent the $\A_e$-codimension of the augmented map-germ, and subindexes represent the augmenting function.

\bgroup
\def\arraystretch{1.2}%
\begin{table}[h]
\begin{tabular}{clcl}
\hline
Type    & \multicolumn{1}{c}{Normal Form}    & \multicolumn{1}{c}{$\A_e$-codimension} &            \\ \hline
$3_{P}$ & $(x,y,z,t^3+P(x,y,z)t)$            & $\mu(P)$                               & \\
$4_Q$   & $(x,y,z,t^4+xt+Q(y,z)t^2)$         & $\mu(Q)$                               &            \\
$4_k^2$ & $(x,y,z,t^4+(x^k+y^2+z^2)t+xt^2)$   & $k$                                    & $k\geq 2$  \\
$5_k$   & $(x,y,z,t^5+xt+yt^2+z^kt^3)$       & $k-1$                                  & $k \geq 1$ \\
$5^2$   & $(x,y,z,t^5+xt+(y^2+z^2)t^2+yt^3)$ & $2$                                    &            \\
$5^3$   & $(x,y,z,t^5+xt+z^2t^2+yt^3)$       & $3$                                    &            \\ \hline
\end{tabular}
\medskip
\caption{$P,Q$ are of type $A_k,D_k,E_6,E_7,E_8$.}
\label{table_44}
\end{table}
\egroup

\begin{teo}
If a corank 1 map-germ $A\colon(\C^4,0)\to(\C^4,0)$ is the result of augmenting an $\A_e$-codimension 1 map-germ, or is an augmentation via a Morse function, then $A$ lies in the equivalence class of one of the germs in Table \ref{table_44}.
\end{teo}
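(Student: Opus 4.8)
The plan is to turn the classification into a finite enumeration dictated by the dimension count, and then to read each normal form off Corollary \ref{coro_nice_versal}. Writing $A=A_{F,g}(f)$ with $f\colon(\C^n,0)\to(\C^p,0)$ and $g\colon(\C^d,0)\to(\C,0)$, the shape $(f_{g(z)}(x),z)$ forces $n+d=p+d=4$, hence $n=p$ and $(n,d)\in\{(3,1),(2,2),(1,3)\}$. Thus the augmented map is always an equidimensional germ admitting an OPSU, and its OPSU $F$ is a stable equidimensional germ $(\C^{n+1},0)\to(\C^{n+1},0)$, that is, a Morin singularity. I would then split along the two hypotheses, either $\Aecod(f)=1$ (the codimension one branch) or $g$ Morse (the Morse branch), noting that, by Theorems \ref{thm_cod1_simple} and \ref{thm_morse_simple}, the entries one is after are pinned down by requiring the augmenting function $g$ (first branch) or the augmented map $f$ (second branch) to be simple, so that $g$ runs over the $A_k,D_k,E_6,E_7,E_8$ list and $f$ over Rieger's and Marar--Tari's lists.

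For the codimension one branch, Proposition \ref{thm_cod1_opsu} guarantees that the augmentation depends only on the $\A$-class of $f$, so it is enough to list the equidimensional germs of $\A_e$-codimension $1$ admitting an OPSU in source dimensions $1,2,3$. Each is the codimension one precursor obtained from a Morin singularity by deleting its last unfolding parameter: $t^3$ in $(1,1)$ with cusp OPSU, $(x,t^4+xt)$ in $(2,2)$ with swallowtail OPSU, and $(x,y,t^5+xt+yt^2)$ in $(3,3)$ with butterfly OPSU. Augmenting each by a function in the complementary $d=3,2,1$ variables and normalising with Corollary \ref{coro_nice_versal} yields precisely the families $3_P$, $4_Q$ and $5_k$; by Corollary \ref{coro_igualdad_houston} their codimensions are $\mu(P)$, $\mu(Q)$ and $k-1$, and by Theorem \ref{thm_r_iff_a} the data $P$, $Q$ and the exponent $k$ are genuine $\A$-invariants.

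For the Morse branch I would invoke the remark in the proof of Theorem \ref{thm_morse_simple} that a $d$-variable Morse augmentation is an iterated one-variable Morse augmentation, together with Theorem \ref{thm_morse_simple}, which forces $f$ to be $\A$-simple whenever $A$ is simple. Calling on Rieger's classification \cite{rieger2en2} in $(2,2)$ and Marar--Tari's \cite{marartari} in $(3,3)$, I would enumerate the simple equidimensional germs of $\A_e$-codimension $\geq 2$ that still admit an OPSU, and verify that, after augmenting by a sum of squares and normalising via Corollary \ref{coro_nice_versal}, they produce exactly $4_k^2$, $5^2$ and $5^3$, with codimensions $k$, $2$ and $3$ as recorded in Table \ref{table_44}.

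The main obstacle is the completeness of this last enumeration: one must argue that no simple equidimensional germ admitting an OPSU in dimension $2$ or $3$ has been overlooked, using the identity $\Aecod(A)=\Aecod(f)\,\tau(g)$ from Corollary \ref{coro_igualdad_houston} together with the constraint $n+d=4$ to eliminate every germ absent from the list. A secondary difficulty is to avoid double counting, since a single germ may present itself both as a codimension one augmentation and as a Morse augmentation of a higher codimension germ; here I would use Theorem \ref{thm_r_iff_a} and Proposition \ref{thm_cod1_opsu} to certify that the six families are pairwise $\A$-inequivalent and that the displayed normal forms are canonical.
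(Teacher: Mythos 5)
Your skeleton matches the paper's: the dimension count forcing $n=p$ and $(n,d)\in\{(1,3),(2,2),(3,1)\}$, and the use of Theorems \ref{thm_cod1_simple} and \ref{thm_morse_simple} to reduce to the case where both $f$ and $g$ are simple. However, the two enumeration claims that carry your argument are false, and they are exactly where the real work lies. In the codimension~1 branch, it is not true that the only equidimensional $\A_e$-codimension~1 germs admitting an OPSU are the three Morin precursors: the lips/beaks germ $(x,t^3\pm x^2t)$ in $(2,2)$ has codimension $1$ (by Corollary \ref{coro_igualdad_houston} it is the augmentation of $t^3$ by $x^2$) and admits an OPSU, and in $(3,3)$ so do $(x,y,z^3+(x^2\pm y^2)z)$ and $(x,y,z^4+xz\pm y^2z^2)$. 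Their augmentations do land in the table, but inside $3_P$ and $4_Q$, not where your scheme places them. Symmetrically, your Morse branch cannot produce ``exactly $4^2_k$, $5^2$, $5^3$'': Morse augmentations of $LB_k$ with $k\geq 3$, of $3_Q$ with $\mu(Q)\geq 2$, and of $4_1^k$ with $k\geq 3$ also satisfy your hypotheses and land in $3_P$ and $4_Q$. So both branch-wise ``exactly'' claims fail, and with them your completeness argument. The paper avoids this by a different decomposition: it fixes the source dimension of $f$, reads off from Rieger's and Marar--Tari's lists which simple germs admit an OPSU at all ($LB_k$, $S$, $11_{2k+1}$ in $(2,2)$; all of Marar--Tari's list in $(3,3)$), and then identifies each augmentation with a table entry, using crucially that $LB_k$, $3_Q$, $4_1^k$, $4_2^k$ are themselves augmentations.

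The deeper gap is the justification ``stable equidimensional germ $(\C^{n+1},0)\to(\C^{n+1},0)$, that is, a Morin singularity''. This is true for target dimension at most $3$, but false in $(4,4)$: the corank~$2$ germ $F(x,y,a,b)=(xy,\ x^2+y^2+ax+by,\ a,\ b)$, whose local algebra is $\O_2/(xy,x^2+y^2)$ (type $I_{2,2}$), is stable there, since the unfolding monomials $(0,x)$, $(0,y)$ together with the constant fields span the $\mathscr{K}_e$-normal space of $(xy,x^2+y^2)$ (Martinet's criterion, see \cite{nunomond}). Consequently a germ of $(\C^3,0)\to(\C^3,0)$ admitting an OPSU need not arise from a Morin singularity at all --- for instance $(xy,\ x^2+y^2+ax,\ a)$ admits exactly this $F$ as an OPSU, so it falls under the hypotheses of the theorem --- and your Morin-precursor enumeration has no mechanism to detect or to exclude such corank~$2$ candidates in either branch. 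The paper's proof sidesteps this by anchoring the enumeration on the published classifications, implicitly using that simple equidimensional germs in these dimensions have corank one; a proof along your lines must either quote the same classification results or supply a separate argument ruling out corank~$2$ germs with OPSUs of low codimension, and that step is missing from what you cite.
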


\begin{proof}
Let $f\colon(\C^n,0)\to(\C^p,0)$ be a simple smooth map-germ of corank 1 which admits an OPSU $F$.
Let $g:(\C^d,0)\to(\C,0)$ be a simple function.
Write $A(x,z) = A_{F,g}(f)(x,z)$ to simplify.
We argue on all the possible dimensions that $f$ can take.

\textbf{Case 1:} $(n,p) = (1,1)$.
Here $f(t) = t^3$ is the only germ that admits an OPSU, which can be of the form $F(t,\lambda) = (t^3+\lambda t,\lambda)$.
To match the dimensions of $A$, it is necessary that $d=3$, so $g$ is a simple function on three variables.
Therefore, $A$ lies in the class of $3_g$.

\textbf{Case 2:} $(n,p) = (2,2)$.
From Rieger's classification, \cite{rieger2en2}, we have that $f$ only admits an OPSU if it lies in the class of one of the following:
\begin{align*}
LB_k & \equiv (x,t^3+x^kt), \; k \geq 2\\
S &\equiv (x,t^4 +xt)\\
11_{2k+1} & \equiv (x,t^4+x^kt+xt^2), \; k\geq 2
\end{align*}
Here, $d=2$. Since all $LB_k$ are agumentations of $t^3$, any augmented $LB_k$ lies in the class of $3_P$ with $P$ a two-variable simple function if $k=2$, or a Morse function if $k>2$.

$S$ has $\A_e$-codimension 1, and admits an OPSU $(x,\lambda,t^4+xt +\lambda t^2)$.
Here the augmenting function can be any 2-variable simple function $Q$, and so $A$ lies in the class of $4_Q$.
Finally, $11_{2k+1}$  for any $k\geq 2$ can only be augmented by a 2-variable Morse function, and so it lies on the class of $4^2_k$.

\textbf{Case 3:} $(n,p)=(3,3)$.
We turn to Marar and Tari's classification, \cite{marartari}, and check that all the germs present admit an OPSU.
In their notation, all of $4_1^k, 4_2^k$ and the family of the form $(x,y,z^3 + \tilde Q(x,y)z)$ with $\tilde Q$ simple are augmentations, and therefore it is easy to check that their augmentations lie on the classes of $4_Q,4^2_k$ and $3_P$ respectively with our notation.
The other classes are primitive, and in their notation take the form:
\begin{align*}
5_1 & \equiv (x,y,t^5+xt+yt^2) \\
5_2 & \equiv (x,y,t^5+xt+y^2t^2+yt^3) \\
5_3 & \equiv (x,y,t^5 + xt + yt^3)
\end{align*}
As $d=1$, if $f\equiv 5_1$, it can only be augmented via $A_k$ functions and so $A$ would lie, in our notation, in the $5_k$ classes.
The classes $5_2$ and $5_3$ in their notation have $\A_e$-codimension greater than one, and therefore we can only augment them via a Morse function, after which it is easy to check that $A$ would lie on our $5^2$ and $5^3$ classes respectively.

Notice that Theorems \ref{thm_cod1_simple} and \ref{thm_morse_simple} ensure that all of the germs in Table \ref{table_44} are simple.

\end{proof}

It can be easily checked that all corank 1 augmentations in $\C^m\to\C^m$ for $m=2,3$ are either augmentations of an $\A_e$-codimension 1 map-germ, or augmentations via Morse functions.
We conjecture these are also all the possible simple augmentations of corank 1 that appear in $\C^4\to\C^4$.
This can probably be checked using Goryunov's method in \cite{gor84}.


\begin{thebibliography}{22}

\bibitem {arnoldnf}{\sc{V. I. Arnold}}
{\it{Local normal forms of Functions}}.
Inventiones Math. (35), 87--109 (1976).

\bibitem {robertamond} {\sc{T. Cooper, D. Mond and R. Wik Atique}} {\it{Vanishing topology of codimension 1 multi-germs over $\R$ and $\C$}}. Compositio Math 131 (2002), no. 2, 121--160.

\bibitem {gabrielov} {\sc{A. M. Gabri\`{e}lov}} {\it{Bifurcations, Dynkin diagrams and the modality of isolated singularities}}. (Russian) Funkcional. Anal. i Prilo\v{z}en. 8 (1974), no. 2, 7--12.

\bibitem{gor84}{\sc{V.V. Goryunov}}, {\it Singularities of projections of full intersections.}  Journal of Soviet Mathematics, (3) 27, 2785--2811 (1984).

\bibitem {houstonkirk} {\sc{K. Houston, N. Kirk}} {\it{On the classification and geometry of corank 1 map-germs from three-space to four-space}}. Singularity theory (Liverpool, 1996), xxii, 325--351,
London Math. Soc. Lecture Note Ser., 263, Cambridge Univ. Press, Cambridge, 1999.
%
\bibitem {houston} {\sc{K. Houston}} {\it{Augmentation of singularities of smooth mappings}}. Internat. J. Math. 15 (2004), no. 2, 111--124.
%
\bibitem{riegerpop}{\sc{C. Klotz, O. Pop, J. H. Rieger}}, {\it Real double-points of deformations of $\mathscr A$-simple map-germs from $\mathbb R^n$ to $\mathbb R^{2n}$.}  Math. Proc. Cambridge Philos. Soc. 142 (2007), no. 2, 341--363.
%
\bibitem{marartari}{\sc{ W. L. Marar, F. Tari}}, {\it On the geometry of simple germs of co-rank 1 maps from R3 to
R3}, Math. Proc. Cambridge Philos. Soc. (3) 119, 469-481 (1996).
%
\bibitem {mond}{\sc{D. Mond}}
{\it{On the Classification of Germs of Maps From $\R^2$ to $\R^3$}}.
Proc. London Math. Soc. (3), 50, 333-369, (1983).
%
\bibitem {nunomond}{\sc{D. Mond, J. J. Nu\~no-Ballesteros}}
{\it{Singularities of mappings. The Local Behaviour of Smooth and Complex Analytic Mappings.}}. Grundlehren der mathematischen Wissenschaften, Springer International Publishing, 357. ISBN 978-3-030-34439-9, 2020.
%
\bibitem{ORW2} {\sc{R. Oset Sinha, M. A. S. Ruas, R. Wik Atique}} {\it On the simplicity of multigerms}. Math. Scand. 119 (2016), no. 2, 169--196.

\bibitem{extranicedim} {\sc{R. Oset Sinha, M. A. S. Ruas, R. Wik Atique}} {\it The extra-nice dimensions}. To appear in Math. Ann. (2022).


\bibitem{rieger2en2}{\sc{J. H. Rieger}}, {\it Families of Maps from the Plane to The Plane}, Journal of the London Mathematical Society, (2) 36, 361-369 (1986).

\bibitem {saito}{\sc{K. Saito}}
{\it{Quasihomogene isolierte Singularit\"aten von Hyperfl\"achen}}.
Inventiones Mathematicae, 14, 123-142 (1971).

\end{thebibliography}
\end{document}